\newcommand{\abs}[1]{\left|#1\right|}
\newcommand{\bdry}[1]{\partial #1}
\newcommand{\closure}[1]{\overline{#1}}
\newcommand{\dint}{\ds{\int}}
\newcommand{\dist}[2]{\text{dist}\, (#1,#2)}
\newcommand{\ds}[1]{\displaystyle #1}
\newcommand{\eps}{\varepsilon}
\newcommand{\goodchi}{\protect\raisebox{2pt}{$\chi$}}
\newcommand{\hquad}{\hspace{0.08in}}
\newcommand{\interior}[1]{#1^\circ}
\newcommand{\ip}[2]{\left<#1,#2\right>}
\newcommand{\loc}{\text{loc}}
\newcommand{\norm}[2][]{\left\|#2\right\|_{#1}}
\renewcommand{\O}{\text{O}}
\renewcommand{\o}{\text{o}}
\newcommand{\PS}[1]{$(\text{PS})_{#1}$}
\newcommand{\QED}{\mbox{\qedhere}}
\newcommand{\restr}[2]{\left.#1\right|_{#2}}
\newcommand{\seq}[1]{\left(#1\right)}
\newcommand{\set}[1]{\left\{#1\right\}}
\newcommand{\strictsubset}{\subset \subset}
\newcommand{\vol}[1]{\left|#1\right|}
\newcommand{\B}{{\mathcal B}}
\newcommand{\D}{{\mathcal D}}
\newcommand{\M}{{\mathcal M}}
\newcommand{\R}{\mathbb R}
\DeclareMathOperator{\divg}{div}
\DeclareMathOperator{\supp}{supp}
\newenvironment{enumroman}{\begin{enumerate}

}{\end{enumerate}}
\newtheorem{lemma}{Lemma}[section]
\newtheorem{proposition}[lemma]{Proposition}
\newtheorem{theorem}[lemma]{Theorem}
\newtheorem{corollary}[lemma]{Corollary}
\theoremstyle{definition}
\newtheorem{definition}[lemma]{Definition}
\numberwithin{equation}{section}
\title{Existence and regularity of higher critical points 
in elliptic free boundary problems}
\date{}
\author{David Jerison and Kanishka Perera}
\thanks{The first author was supported 
by NSF grant DMS 1069225 and the Stefan Bergman Trust.}
\thanks{This work was completed while the second-named author 
was visiting the Department of Mathematics at the Massachusetts Institute 
of Technology, and he is grateful for the kind hospitality of the department.}
\address{David Jerison, Department of Mathematics, Massachusetts Institute of Technology, 77  Massachusetts Avenue, Cambridge, MA 02139.}
\email{jerison@math.mit.edu}
\address{Kanishka Perera, Department of Mathematical Sciences, 
Florida Institute of Technology, 
Melbourne, FL 32901.}
\email{kperera@fit.edu}
\subjclass[2010]{35R35, 35J20}
\keywords{superlinear elliptic free boundary problems, higher critical points, 
existence, nondegeneracy, regularity, variational methods, Nehari manifold}
\begin{document}


\begin{abstract}
Existence and regularity of minimizers in elliptic free boundary
problems have been extensively studied in the literature. We initiate
the corresponding study of higher critical points by considering a
superlinear free boundary problem related to plasma confinement. The
associated energy functional is nondifferentiable, and therefore
standard variational methods cannot be used directly to prove the
existence of critical points. Here we obtain a nontrivial generalized
solution $u$ of mountain pass type as the limit of mountain pass
points of a suitable sequence of $C^1$-functionals approximating the
energy. We show that $u$ minimizes the energy on the associated
Nehari manifold and use this fact to prove that it is
nondegenerate. We use the nondegeneracy of $u$ to show that it
satisfies the free boundary condition in the viscosity sense.
Moreover, near any free boundary point that has a measure-theoretic
normal, the free boundary is a smooth surface, and hence
the free boundary condition holds in the classical sense.
\end{abstract}

\maketitle

\section{Introduction}\label{sec: intro}

The existence and regularity of minimizers in elliptic free boundary
problems have been extensively studied for over four decades (see,
e.g.,
\cite{MR618549,MR732100,MR1044809,MR2145284,MR861482,MR990856,MR1029856,MR973745,MR1906591,MR2082392,MR2572253,MR1009785,MR0440187,MR1664689,MR2281453,MR1620644,MR1759450}
and the references therein). Let $\Omega$ be a smooth bounded domain
in $\R^N,\, N \ge 2$. A typical two-phase free boundary problem seeks
a minimizer of the variational integral
\[
\int_\Omega \left[\frac{1}{2}\, |\nabla u|^2 + \goodchi_{\set{u > 0}}(x)\right] dx
\]
among all functions $u \in H^1(\Omega) \cap C^0(\Omega)$ with prescribed values on some portion of the boundary $\bdry{\Omega}$, where $\goodchi_{\set{u > 0}}$ is the characteristic function of the set $\set{u > 0}$. A local minimizer $u$ satisfies
\[
\Delta u = 0
\]
except on the free boundary $\bdry{\set{u > 0}} \cap \Omega$, and
\[
|\nabla u^+|^2 - |\nabla u^-|^2 = 2
\]
on smooth portions of the free boundary, where $\nabla u^\pm$ are the
limits of $\nabla u$ from $\set{u > 0}$ and $\interior{\set{u \le
    0}}$, respectively. The existence and regularity of local
minimizers for this problem have been studied, for example, in Alt and
Caffarelli \cite{MR618549}, Alt, Caffarelli and Friedman
\cite{MR732100}, Weiss \cite{MR1620644,MR1759450}, Caffarelli, Jerison and Kenig
\cite{MR2082392}.

In the present paper we initiate the corresponding study of higher
critical points by considering a superlinear free boundary problem
related to plasma confinement (see, e.g.,
\cite{MR587175,MR1644436,MR1360544,MR1932180,MR0412637,MR0602544}). We
consider the functional
\[
J(u) = \int_\Omega \left[\frac{1}{2}\, |\nabla u|^2 + \goodchi_{\set{u > 1}}(x) - \frac{1}{p}\, (u - 1)_+^p\right] dx, \quad u \in H^1_0(\Omega),
\]
for $2 < p < \infty$ if $N = 2$ and $2 < p < 2N/(N - 2)$ if $N \ge 3$.
Here $H^1_0(\Omega)$ is the usual Sobolev space with the norm given
by 
\[
\norm{u}^2 = \int_\Omega |\nabla u|^2\, dx.
\]
The functional $J$ is
nondifferentiable and therefore standard variational methods cannot be
used directly to obtain a higher critical point. We will obtain our solution
as the limit of mountain pass points of a suitable sequence of
$C^1$-functionals approximating $J$. The crucial ingredient in the
passage to the limit is the uniform Lipschitz
continuity result, proved in Caffarelli, Jerison, and Kenig
\cite{MR1906591} (see Proposition \ref{Proposition 3}).

The mountain pass solution $u$ that we construct
is Lipschitz continuous in $\Omega$ and satisfies
the following Euler-Lagrange equations in a generalized sense 
(see Definition \ref{def: generalized}).
\begin{equation} \label{1}
\left\{\begin{aligned}
- \Delta u & = (u - 1)_+^{p-1} && \text{in } \Omega \setminus \bdry{\set{u > 1}}\\[10pt]
|\nabla u^+|^2 - |\nabla u^-|^2 & = 2 && \text{on } \bdry{\set{u > 1}}\\[10pt]
u & = 0 && \text{on } \bdry{\Omega},
\end{aligned}\right.
\end{equation}
where $w_\pm = \max \set{\pm w,0}$ are the positive and negative parts of $w$, 
respectively, $\nabla u^\pm$ are the limits of $\nabla u$ from 
$\set{u > 1}$ and $\interior{\set{u \le 1}}$, respectively.

It will follow from integration by parts that $u$ belongs to the Nehari-type 
manifold
\[
\M = \set{u \in H^1_0(\Omega) : \int_{\set{u > 1}} |\nabla u|^2\, dx = \int_{\set{u > 1}} (u - 1)^p\, dx > 0}.
\]
We will also show that $u$ minimizes the functional $J$ when restricted
to the Nehari manifold.  This minimizing property will permit us to deduce that the solution $u$ is non-degenerate in the following
sense.
\begin{definition}\label{def: degenerate}
A generalized solution $u$ of problem \eqref{1} is nondegenerate if
there exist constants $r_0,\, c > 0$ such that if $x_0 \in \set{u >
  1}$ and $r := \dist{x_0}{\set{u \le 1}} \le r_0$, then $u(x_0) \ge 1
+ c\, r$.
\end{definition}
This nondegeneracy makes it possible to apply
the regularity results in 
Lederman and Wolanski
\cite{MR2281453} showing
that $u$ satisfies the free boundary
condition in the viscosity sense and that near points of
the free boundary with a measure-theoretic normal, the free
boundary is a smooth surface and hence the free boundary condition holds in the
classical sense.

To formulate our results more precisely, recall 
the definition of the mountain pass solution. 
\begin{definition}[Hofer \cite{MR812787}]
We say that $u \in H^1_0(\Omega)$ is a mountain pass point of $J$ if the set $\set{v \in U : J(v) < J(u)}$ is neither empty nor path connected for every neighborhood $U$ of $u$.
\end{definition}
Let
\[
\Gamma = \set{\gamma \in C([0,1],H^1_0(\Omega)) : \gamma(0) = 0,\, J(\gamma(1)) < 0}
\]
be the class of continuous paths from $0$ to the 
set $\set{u \in H^1_0(\Omega) : J(u) < 0}$, and denote
\[
c^* = c^*(\Omega)  := \inf_{\gamma \in \Gamma}\, \max_{u \in \gamma([0,1])}\, J(u).
\]

Our main result is the following theorem.
\begin{theorem} \label{Theorem 1}  Let $\Omega$ be a smooth bounded
domain in $\R^N$. Let $2 < p < \infty$ if $N = 2$ and $2 < p < 2N/(N - 2)$ 
if $N \ge 3$. Then

a) $c^* = c^*(\Omega) >0$. 

b) There is a mountain pass point $u$ for the functional $J$ satisfying
$J(u) = c^*$, which is Lipschitz continuous and satisfies \eqref{1} in the 
generalized sense of Definition \ref{def: generalized}.

c) $u$ minimizes $\restr{J}{\M}$ and is nondegenerate.
\end{theorem}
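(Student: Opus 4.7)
My plan is to approximate the nondifferentiable $J$ by a family of $C^1$ functionals $J_\eps$, apply the standard mountain pass theorem to each, and pass to the limit using the uniform Lipschitz bound of Proposition \ref{Proposition 3}.

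For part (a), $\Gamma$ is nonempty: any $v \in H^1_0(\Omega)$ with $v_+ \not\equiv 0$ satisfies $J(tv) \to -\infty$ as $t \to \infty$, since the superlinear term grows like $-t^p$ while the other terms are at most quadratic in $t$. The bound $c^* > 0$ follows from $\goodchi_{\set{u > 1}} \ge 0$ and Sobolev's inequality:
\[
J(u) \ge \frac{1}{2}\, \norm{u}^2 - \frac{C}{p}\, \norm{u}^p,
\]
so $J \ge \alpha > 0$ on a small sphere $\norm{u} = \rho$, and every $\gamma \in \Gamma$ must cross this sphere (any point with $J < 0$ has norm $> \rho$).

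For part (b), let $H_\eps \in C^\infty(\R)$ be a monotone smoothing of $\goodchi_{(1,\infty)}$, constant outside $[1, 1 + \eps]$, and set
\[
J_\eps(u) = \int_\Omega \left[\frac{1}{2}\, |\nabla u|^2 + H_\eps(u) - \frac{1}{p}\, (u - 1)_+^p\right] dx.
\]
Each $J_\eps \in C^1(H^1_0(\Omega))$ inherits the mountain pass geometry uniformly in small $\eps$ and satisfies the Palais--Smale condition (standard for a superlinear subcritical problem). The mountain pass theorem produces critical points $u_\eps$ at levels $c^*_\eps$, with $c^*_\eps \to c^*$ by path-class comparison. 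A Palais--Smale relation yields a uniform $H^1$ bound, and Proposition \ref{Proposition 3} provides a uniform Lipschitz bound. Extracting a subsequence, $u_\eps \to u$ uniformly on compact subsets of $\Omega$ and weakly in $H^1_0(\Omega)$; the limit $u$ is Lipschitz. On any compact subset of $\Omega \setminus \bdry{\set{u > 1}}$, $u_\eps$ is bounded away from $1$ for small $\eps$, so $H_\eps'(u_\eps) = 0$ there and $-\Delta u = (u - 1)_+^{p-1}$ passes to the limit. The jump condition $|\nabla u^+|^2 - |\nabla u^-|^2 = 2$ is recovered via a Pohozaev-type identity tested against a smooth vector field, the passage justified by the uniform Lipschitz bound. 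Finally, Hofer's topological characterization combined with the $C^0$ convergence identifies $u$ as a mountain pass point of $J$ with $J(u) = c^*$.

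For part (c), testing the generalized equation \eqref{1} with $(u - 1)_+$ and integrating by parts across $\bdry{\set{u > 1}}$ (the boundary term vanishes because $u = 1$ there) yields
\[
\int_{\set{u > 1}} |\nabla u|^2\, dx = \int_{\set{u > 1}} (u - 1)^p\, dx,
\]
which must be strictly positive; otherwise $u$ would be harmonic on $\Omega$ with zero boundary values, hence $u \equiv 0$, contradicting $J(u) = c^* > 0$. Thus $u \in \M$. For any $v \in \M$, one constructs an admissible path $\gamma \in \Gamma$ through $v$ whose maximum value of $J$ is $J(v)$ (using the Nehari identity to locate the unique maximum of the scaling profile), giving $J(v) \ge c^* = J(u)$; hence $u$ minimizes $\restr{J}{\M}$. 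Nondegeneracy then follows by a standard competitor argument: if $u(x_0) < 1 + c\, r$ with $c$ small for some $x_0 \in \set{u > 1}$ at distance $r$ from the free boundary, a local modification pushing $u$ below $1$ near $x_0$, followed by a small rescaling to restore membership in $\M$, produces a competitor of strictly smaller energy, contradicting minimality.

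The hard part is the passage to the limit in (b): showing that $u$ is a mountain pass point of $J$ and that $J(u) = c^*$ exactly. Because $\goodchi_{\set{u > 1}}$ is discontinuous in $u$, $J$ is not continuous, so both the value identification $J(u) = c^*$ and the verification of Hofer's topological mountain pass property for $J$ require careful use of the uniform Lipschitz bound to control the measure of $\set{u_\eps > 1} \triangle \set{u > 1}$ near the free boundary.
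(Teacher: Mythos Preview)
Your overall strategy---regularize, apply the mountain pass theorem, pass to the limit using the uniform Lipschitz bound, and use the Nehari manifold for nondegeneracy---is the same as the paper's. But there is a genuine logical gap in part~(b), precisely where you yourself flag the difficulty.

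You assert that $c^*_\eps \to c^*$ ``by path-class comparison'' and that ``Hofer's topological characterization combined with the $C^0$ convergence identifies $u$ as a mountain pass point of $J$ with $J(u) = c^*$.'' Neither claim is justified. Path comparison gives only $c_\eps \le c^*$ (since $J_\eps \le J$), not the reverse; and because $J$ is discontinuous, $C^0$ convergence of $u_\eps$ does not by itself yield $J(u_\eps) \to J(u)$ or transfer the mountain pass structure. What the limiting argument actually gives (Lemma~\ref{Lemma 6}\ref{6.iv}) is only
\[
J(u) \le \varliminf c_j \le \varlimsup c_j \le J(u) + \vol{\set{u = 1}},
\]
and you have no a priori control on $\vol{\set{u = 1}}$. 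The paper closes this gap \emph{not} by estimating the symmetric difference of level sets, but by reordering the logic: first show $u \in \M$, then use the Nehari path construction you describe in~(c) to get $c^* \le \inf_\M J \le J(u)$. Combined with $J(u) \le \varliminf c_j \le c^*$, this forces $J(u) = c^* = \inf_\M J$ (and a posteriori $c_j \to c^*$). The mountain pass property of $u$ is then read off from the explicit Nehari path $\gamma_u$ through $u$ (Proposition~\ref{Proposition 9}), not from any limit argument. In short, the Nehari manifold is not just used for~(c); it is the device that rescues~(b).

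A smaller gap: your claim that on compact subsets of $\Omega \setminus \bdry{\set{u > 1}}$ the functions $u_\eps$ are bounded away from~$1$ is fine on $\set{u > 1}$, but on $\interior{\set{u \le 1}}$ the limit $u$ may equal~$1$ on a set of positive measure, so $H_\eps'(u_\eps)$ need not vanish there. The paper handles harmonicity in $\interior{\set{u \le 1}}$ by a separate argument via the inequalities $-\Delta u \le A_0$ and $-\Delta u \ge 0$ on $\set{u \le 1}$, which force $u \in W^{2,q}_{\loc}$ and push the support of $\Delta(u-1)_-$ onto $\bdry{\set{u < 1}} \cap \bdry{\set{u > 1}}$.
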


\begin{corollary}[Lederman and Wolanski
\cite{MR2281453}] \label{Corollary 1}
Let $u$ be a nondegenerate generalized solution to 
problem \eqref{1} as in Theorem \ref{Theorem 1}. Then

a) $u$ satisfies the free boundary condition in the viscosity sense and

b) In a neighborhood of every free boundary point where the measure-theoretic 
normal exists, the free boundary is a $C^{1,\, \alpha}$-surface, and 
hence $u$ satisfies the free boundary condition in the classical sense.
\end{corollary}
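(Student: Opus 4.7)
The plan is to deduce both parts of the corollary by direct application of the Lederman--Wolanski regularity theory in \cite{MR2281453}, so the work reduces to checking that the generalized solution $u$ produced by Theorem \ref{Theorem 1} fits into the framework of that paper.

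First I would verify the structural hypotheses. By Theorem \ref{Theorem 1}(b), $u \in H^1_0(\Omega)$ is Lipschitz in $\Omega$ and solves $-\Delta u = (u-1)_+^{p-1}$ classically away from the free boundary $\bdry{\set{u > 1}}$; since $u$ is bounded, the right-hand side is a bounded (indeed continuous) function on $\Omega$, so the PDE in each phase is of the inhomogeneous type with $L^\infty$ source treated in \cite{MR2281453}. The jump condition $|\nabla u^+|^2 - |\nabla u^-|^2 = 2$ holds in the generalized sense of Definition \ref{def: generalized}, and by Theorem \ref{Theorem 1}(c), $u$ is nondegenerate in the sense of Definition \ref{def: degenerate}. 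Nondegeneracy combined with the Lipschitz bound supplies the two-sided linear growth of $u-1$ near the free boundary that the Lederman--Wolanski machinery requires.

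For part (a), once these ingredients are in place, the viscosity upgrade theorem of \cite{MR2281453} applies directly: any smooth test surface touching $\bdry{\set{u > 1}}$ from inside or outside at a free boundary point $x_0$ must satisfy the corresponding inequality version of $|\nabla u^+|^2 - |\nabla u^-|^2 = 2$. For part (b), I would invoke the blow-up and flatness-implies-$C^{1,\alpha}$ arguments of the same paper: at any free boundary point $x_0$ admitting a measure-theoretic normal $\nu$, nondegeneracy plus Lipschitz continuity forces the blow-up sequence $u_{x_0,r}(x) := (u(x_0 + rx) - 1)/r$ to converge along subsequences to a two-plane solution aligned with $\nu$, which is precisely the flatness hypothesis of their regularity theorem. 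That theorem then yields the $C^{1,\alpha}$ conclusion on $\bdry{\set{u > 1}}$ in a neighborhood of $x_0$, and classical elliptic boundary regularity gives $\nabla u^\pm$ continuous up to the smooth piece of the free boundary from each side, so the viscosity condition becomes a classical pointwise identity.

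The main obstacle is essentially bookkeeping: one must match Definition \ref{def: generalized} with the weak formulation adopted in \cite{MR2281453}, verifying that the semilinear term $(u-1)_+^{p-1}$ can be absorbed into their framework as a bounded forcing term on each phase, and confirm that the nondegeneracy of Definition \ref{def: degenerate} is quantitative enough, uniform in scale, to pass through their scaling arguments. Both are routine once one notes that $u$ is Lipschitz and the right-hand side is a bounded, locally H\"older continuous function on $\Omega$ that does not jump across the free boundary, so no new PDE estimates beyond those of Theorem \ref{Theorem 1} are required.
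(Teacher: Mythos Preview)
Your overall plan---reduce the corollary to the Lederman--Wolanski theory in \cite{MR2281453}---is the right one, and your discussion of nondegeneracy and Lipschitz bounds is fine. But there is a missing ingredient that the paper's proof supplies and your proposal omits: the results in \cite{MR2281453} are formulated not for abstract ``generalized solutions'' in the sense of Definition \ref{def: generalized}, but for limits of solutions to the singular perturbation problem $\Delta u_\eps = \beta_\eps(u_\eps - 1) + f_\eps$ with forcing terms $f_\eps \to f$. Matching Definition \ref{def: generalized} directly with their weak formulation is not the ``bookkeeping'' you describe; it is not clear that the domain-variation identity alone, without the approximating family, places $u$ inside their hypotheses.

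The paper closes this gap by going back to the approximating sequence $u_j = u^{\eps_j}$ from Lemma \ref{Lemma 7}: each $u_j$ solves \eqref{2}, which is exactly the perturbed equation in \cite{MR2281453} with forcing $f_j(x) = -(u_j(x)-1)_+^{p-1}$. Lemma \ref{Lemma 2} gives a uniform $L^\infty$ bound on $u_j$, Lemma \ref{Lemma 6}\,\ref{6.i} gives $u_j \to u$ uniformly, and since $p > 2$ the forcing terms $f_j$ converge uniformly to $f(x) = -(u(x)-1)_+^{p-1}$. With nondegeneracy from Theorem \ref{Theorem 1}(c), this is precisely the data to which Corollaries 7.1, 7.2 and Theorem 9.2 of \cite{MR2281453} apply. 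So the fix is short, but you do need to invoke the singular perturbation sequence rather than work with $u$ alone.
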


We point out that the existence of a mountain pass solution is by no
means routine due to the severe lack of smoothness of $J$.  Indeed,
$J$ is not even continuous, much less of class $C^1$.  Note that for
the functional in which the discontinuous term $\chi_{\{u>1\}}$ is removed,
there is no difficulty in applying the mountain pass theorem (see
Flucher and Wei \cite{MR1644436} and Shibata \cite{MR1932180}).  We
believe that Theorem \ref{Theorem 1} is the first result in the
literature that establishes the existence of a higher critical point
and verifies its nondegeneracy in a free boundary problem of the type
\eqref{1}.

The paper is organized as follows. 
In Section \ref{sec: mountain pass} we construct a putative mountain pass 
solution $u$ as a limit of solutions to a regularized problem.
We prove that $u$ is a generalized solution in Section \ref{sec: generalized}.  
In Section \ref{sec: Nehari nondegeneracy} we show that $u$ belongs
to the Nehari-type manifold and that minimizers on the Nehari
manifold have the nondegeneracy property. 
In Section \ref{sec: final} we prove our main theorem by
showing that our solution $u$ does minimize over the Nehari
manifold.  We deduce Corollary \ref{Corollary 1}
and mention further questions about partial regularity of the free boundary.

\section{Limits of mountain pass solutions} \label{sec: mountain pass}


We approximate $J$ by $C^1$-functionals as follows. Let $\beta : \R
\to [0,2]$ be a smooth function such that $\beta(t) = 0$ for $t \le
0$, $\beta(t) > 0$ for $0 < t < 1$, $\beta(t) = 0$ for $t \ge 1$, and
$\int_0^1 \beta(s)\, ds = 1$. Then set
\[
\B(t) = \int_0^t \beta(s)\, ds,
\]
and note that $\B : \R \to [0,1]$ is a smooth nondecreasing function
such that $\B(t) = 0$ for $t \le 0$, $\B(t) > 0$ for $0 < t < 1$, and
$\B(t) = 1$ for $t \ge 1$. For $\eps > 0$, let
\[
J_\eps(u) = \int_\Omega \left[\frac{1}{2}\, |\nabla u|^2 + \B\left(\frac{u - 1}{\eps}\right) - \frac{1}{p}\, (u - 1)_+^p\right] dx, \quad u \in H^1_0(\Omega)
\]
and note that $J_\eps$ is of class $C^1$.

If $u$ is a critical point of $J_\eps$, then $u$ is a weak solution of
\begin{equation} \label{2}
\left\{\begin{aligned}
\Delta u & = \frac{1}{\eps}\, \beta\left(\frac{u - 1}{\eps}\right) - (u - 1)_+^{p-1} && \text{in } \Omega\\[10pt]
u & = 0 && \text{on } \bdry{\Omega},
\end{aligned}\right.
\end{equation}
and hence also a classical solution by elliptic regularity theory. By
the maximum principle, either $u > 0$ everywhere or $u$ vanishes
identically. If $u \le 1$ everywhere, then $u$ is harmonic in $\Omega$
and hence vanishes identically again. So, if $u$ is a nontrivial
critical point, then $u > 0$ in $\Omega$ and $u > 1$ in a nonempty
open subset of $\Omega$.

\begin{lemma}
$J_\eps$ satisfies the Palais-Smale compactness condition {\em \PS{}}, i.e., every sequence $\seq{u_j} \subset H^1_0(\Omega)$ such that $J_\eps(u_j)$ is bounded and $J_\eps'(u_j) \to 0$ has a convergent subsequence.
\end{lemma}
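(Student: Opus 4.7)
The approach is the standard Ambrosetti--Rabinowitz scheme for a superlinear $C^1$ functional, with mild care taken because the ``lower order'' term $\B((u-1)/\eps)$ contains the small parameter $\eps$. Throughout the argument, $\eps > 0$ is fixed, so any constant depending only on $\eps$, $p$, and $|\Omega|$ is harmless.

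First I would establish boundedness of the sequence $(u_j)$ in $H^1_0(\Omega)$. Set $M := \sup_j |J_\eps(u_j)|$ and $\delta_j := \norm{J_\eps'(u_j)}{H^{-1}} \to 0$. Using
\[
\langle J_\eps'(u), u \rangle = \norm{u}^2 + \int_\Omega \frac{u}{\eps}\,\beta\!\left(\frac{u-1}{\eps}\right) dx - \int_\Omega u\,(u-1)_+^{p-1}\, dx,
\]
a short computation gives
\[
J_\eps(u) - \frac{1}{p}\,\langle J_\eps'(u), u \rangle = \left(\frac{1}{2}-\frac{1}{p}\right) \norm{u}^2 + \int_\Omega \!\left[\B\!\left(\tfrac{u-1}{\eps}\right) - \frac{u}{p\eps}\,\beta\!\left(\tfrac{u-1}{\eps}\right)\right] dx + \frac{1}{p}\int_\Omega (u-1)_+^{p-1}\, dx,
\]
where I used $-\frac{1}{p}(u-1)_+^p + \frac{u}{p}(u-1)_+^{p-1} = \frac{1}{p}(u-1)_+^{p-1}$ pointwise. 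The last integral is nonnegative, and the bracketed integrand is supported where $u \le 1+\eps$ and bounded there by a constant $C_\eps$, so the middle integral is bounded by $C_\eps |\Omega|$. Since $p > 2$, I conclude
\[
\left(\frac{1}{2}-\frac{1}{p}\right) \norm{u_j}^2 \le M + \frac{\delta_j}{p}\,\norm{u_j} + C_\eps |\Omega|,
\]
which yields $\sup_j \norm{u_j} < \infty$.

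Next, passing to a subsequence, $u_j \rightharpoonup u$ in $H^1_0(\Omega)$, and by the Rellich--Kondrachov theorem (using $p < 2^*$) also $u_j \to u$ strongly in $L^p(\Omega)$ and a.e.\ in $\Omega$. To upgrade to strong $H^1_0$ convergence I would test $J_\eps'(u_j) - J_\eps'(u)$ against $u_j - u$:
\[
\langle J_\eps'(u_j)-J_\eps'(u), u_j - u \rangle = \norm{u_j - u}^2 + A_j - B_j,
\]
where
\[
A_j = \int_\Omega \frac{1}{\eps}\!\left[\beta\!\left(\tfrac{u_j-1}{\eps}\right) - \beta\!\left(\tfrac{u-1}{\eps}\right)\right](u_j - u)\, dx,
\quad
B_j = \int_\Omega \!\left[(u_j-1)_+^{p-1} - (u-1)_+^{p-1}\right](u_j - u)\, dx.
\]
The left-hand side tends to zero, since $J_\eps'(u_j)\to 0$ in $H^{-1}$, $\norm{u_j-u}$ is bounded, and $\langle J_\eps'(u), u_j - u\rangle \to 0$ by weak convergence. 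For $A_j$, the factor involving $\beta$ is bounded by $4/\eps$ in $L^\infty(\Omega)$, so $|A_j| \le (4/\eps)\,|\Omega|^{1/2}\,\norm{u_j - u}{L^2} \to 0$. For $B_j$, the map $t \mapsto (t-1)_+^{p-1}$ has subcritical growth, so $(u_j-1)_+^{p-1} \to (u-1)_+^{p-1}$ in $L^{p/(p-1)}(\Omega)$ by the standard Nemytskii-continuity lemma (together with the $L^p$ convergence $u_j \to u$), and then $|B_j| \to 0$ by Hölder's inequality. Combining these estimates gives $\norm{u_j-u}^2 \to 0$, which is the desired conclusion.

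The only step that requires any real care is the boundedness estimate, because the $\beta$-term grows like $1/\eps$ and would be fatal if $\eps$ were also varying; since $\eps$ is fixed in this lemma, this is not a genuine obstacle. The strong convergence step is then completely routine, the subcriticality $p < 2^*$ being used exactly once, to dispose of $B_j$ via Rellich--Kondrachov.
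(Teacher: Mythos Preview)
Your argument is correct, with one small slip in the boundedness step: the bracketed integrand $\B((u-1)/\eps) - \frac{u}{p\eps}\,\beta((u-1)/\eps)$ is \emph{not} supported in $\{u\le 1+\eps\}$; for $u\ge 1+\eps$ it equals $1$. What is true (and is all you need) is that it is globally bounded in absolute value by a constant depending only on $\eps$ and $p$, since $0\le\B\le 1$ everywhere and the $\beta$-term vanishes outside $\{1<u<1+\eps\}$ where $|u|\le 1+\eps$ and $0\le\beta\le 2$. With that correction your inequality and the rest of the proof go through unchanged.

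The paper takes a slightly different route to boundedness. Instead of the Ambrosetti--Rabinowitz combination $J_\eps(u_j)-\tfrac{1}{p}\langle J_\eps'(u_j),u_j\rangle$, it splits $u_j=(u_j-1)_++[1-(u_j-1)_-]=:u_j^++u_j^-$, extracts one relation from $J_\eps(u_j)=\O(1)$ and a second by testing $J_\eps'(u_j)$ against $u_j^+$, and combines them. Testing with $u_j^+$ rather than $u_j$ has the virtue that $\tfrac{1}{\eps}\beta((u_j-1)/\eps)\,u_j^+$ is genuinely supported in $\{1<u_j<1+\eps\}$ and bounded there by $2$, so the nuisance term is handled without any $\eps$-dependent constant. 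Your approach is the textbook one and is just as effective here since $\eps$ is fixed. The paper does not spell out the passage from boundedness to strong convergence, dismissing it as ``a standard argument''; your explicit treatment via $\langle J_\eps'(u_j)-J_\eps'(u),u_j-u\rangle$ is exactly that standard argument.
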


\begin{proof}
It suffices to show that $\seq{u_j}$ is bounded by a standard argument. We have
\begin{equation} \label{3.1}
J_\eps(u_j) = \int_\Omega \left[\frac{1}{2}\, |\nabla u_j|^2 + \B\left(\frac{u_j - 1}{\eps}\right) - \frac{1}{p}\, (u_j - 1)_+^p\right] dx = \O(1)
\end{equation}
and
\begin{multline} \label{3.2}
J_\eps'(u_j)\, v = \int_\Omega \left[\nabla u_j \cdot \nabla v + \frac{1}{\eps}\, \beta\left(\frac{u_j - 1}{\eps}\right) v - (u_j - 1)_+^{p-1}\, v\right] dx = \o(1) \norm{v},\\[7.5pt]
v \in H^1_0(\Omega).
\end{multline}
Since $\B$ and $\beta$ are bounded, writing
\[
u_j = (u_j - 1)_+ + [1 - (u_j - 1)_-] =: u_j^+ + u_j^-
\]
in \eqref{3.1} gives
\[
\int_\Omega \left[|\nabla u_j^+|^2 + |\nabla u_j^-|^2 - \frac{2}{p}\, (u_j^+)^p\right] dx = \O(1),
\]
and taking $v = u_j^+$ in \eqref{3.2} and using the Sobolev imbedding theorem gives
\[
\int_\Omega \Big[|\nabla u_j^+|^2 - (u_j^+)^p\Big]\, dx = \O(1)\, \|u_j^+\|.
\]
Combining the last two equations now gives
\[
\left(1 - \frac{2}{p}\right) \|u_j^+\|^2 + \|u_j^-\|^2 = \O(1) \left(\|u_j^+\| + 1\right),
\]
which implies that $\|u_j^\pm\|$, and hence $\norm{u_j}$, is bounded.
\end{proof}

Since $p < 2N/(N - 2)$, the Sobolev imbedding theorem implies
\[
J_\eps(u) \ge \int_\Omega \left[\frac{1}{2}\, |\nabla u|^2 - \frac{1}{p}\, |u|^p\right] dx \ge \frac{1}{2} \norm{u}^2 - C \norm{u}^p \quad \forall u \in H^1_0(\Omega)
\]
for some constant $C$ depending on $\Omega$. 
Since $p > 2$, then there exists a constant $\rho > 0$ such that
\[
\norm{u} \le \rho \implies J_\eps(u) \ge \frac{1}{3} \norm{u}^2.
\]
Moreover
\[
J_\eps(u) \le \int_\Omega \left[\frac{1}{2}\, |\nabla u|^2 + 1 - \frac{1}{p}\, (u - 1)_+^p\right] dx
\]
and hence, again because $p>2$, there exists 
a function $u_0 \in H^1_0(\Omega)$ such that $J_\eps(u_0) < 0 = J_\eps(0)$. 
Therefore the class of paths
\[
\Gamma_\eps = \set{\gamma \in C([0,1],H^1_0(\Omega)) : \gamma(0) = 0,\, J_\eps(\gamma(1)) < 0}
\]
is nonempty and
\begin{equation} \label{22}
c_\eps := \inf_{\gamma \in \Gamma_\eps}\, \max_{u \in \gamma([0,1])}\, J_\eps(u) \ge \frac{\rho^2}{3}.
\end{equation}

\begin{lemma} \label{Lemma 7}
$J_\eps$ has a (nontrivial) critical point $u^\eps$ at the level $c_\eps$.
\end{lemma}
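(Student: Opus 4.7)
The plan is to invoke the standard mountain pass theorem of Ambrosetti and Rabinowitz applied to $J_\eps$. All of its hypotheses have in fact already been assembled in the preceding material, so the proof should amount to collecting them and applying the theorem.

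First, I would verify the three ingredients needed for the minimax argument. (1) Regularity and compactness: $J_\eps \in C^1(H^1_0(\Omega),\R)$ by construction of the smooth primitive $\B$, and the preceding lemma shows that $J_\eps$ satisfies \PS{}. (2) Mountain pass geometry at $0$: the Sobolev imbedding estimate preceding \eqref{22} gives $J_\eps(u) \ge \tfrac{1}{3}\norm{u}^2$ whenever $\norm{u} \le \rho$, so in particular $J_\eps(u) \ge \rho^2/3$ on the sphere $\norm{u} = \rho$. Since $J_\eps(0) = 0$, the origin is a strict local minimum. (3) A point beyond the barrier: there is $u_0 \in H^1_0(\Omega)$ with $J_\eps(u_0) < 0$, so $\Gamma_\eps$ is nonempty and (upon choosing $u_0$ with $\norm{u_0} > \rho$ or concatenating with a small path from $0$) the mountain pass level $c_\eps$ is finite and, by the estimate on the sphere, satisfies $c_\eps \ge \rho^2/3$ as recorded in \eqref{22}.

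With these ingredients in hand, the mountain pass theorem produces a critical point $u^\eps \in H^1_0(\Omega)$ of $J_\eps$ at the level $c_\eps$. Nontriviality is then automatic: $J_\eps(u^\eps) = c_\eps \ge \rho^2/3 > 0 = J_\eps(0)$, so $u^\eps \ne 0$. As remarked before the lemma, by the maximum principle this $u^\eps$ is in fact strictly positive in $\Omega$ and strictly greater than $1$ on a nonempty open set, which will be useful in the next section when we pass to the limit $\eps \to 0$.

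There is no real obstacle in this lemma; the only mild subtlety is that the mountain pass theorem is sometimes stated with paths ending at a fixed point rather than, as in the definition of $\Gamma_\eps$, in the open sublevel set $\set{J_\eps < 0}$. The two formulations yield the same minimax value, since any admissible path can be truncated or extended within $\set{J_\eps < 0}$ without affecting $\max J_\eps$ on its image. All the nontrivial analytic work, namely the \PS{} verification and the coercivity estimate near $0$, has already been carried out, so this step is essentially bookkeeping.
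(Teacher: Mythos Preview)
Your proposal is correct and essentially the same as the paper's argument. The only cosmetic difference is that the paper spells out the contradiction via the first deformation lemma (if there were no critical point at level $c_\eps$, a deformation $\eta$ that is the identity on $\{J_\eps \le 0\}$ would push a nearly optimal path in $\Gamma_\eps$ below the level $c_\eps$), whereas you cite the Ambrosetti--Rabinowitz theorem as a black box; these are the same proof, and your remark about paths ending in the sublevel set $\{J_\eps<0\}$ rather than at a fixed endpoint is exactly the point the paper handles by requiring $\eta$ to fix $\{J_\eps\le 0\}$.
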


\begin{proof}
If not, then there exists a constant $0 < \delta \le c_\eps/2$ and a
continuous map $\eta : \set{J_\eps \le c_\eps + \delta} \to
\set{J_\eps \le c_\eps - \delta}$ such that $\eta$ is the identity on
$\set{J_\eps \le 0}$ by the first deformation lemma (see, e.g., Perera
and Schechter \cite[Lemma 1.3.3]{MR3012848}). By the definition of
$c_\eps$, there exists a path $\gamma \in \Gamma_\eps$ such that
$\max_{\gamma([0,1])}\, J_\eps \le c_\eps + \delta$. Then
$\widetilde{\gamma} := \eta \circ \gamma \in \Gamma_\eps$ and
$\max_{\widetilde{\gamma}([0,1])}\, J_\eps \le c_\eps - \delta$, a
contradiction.
\end{proof}

\begin{lemma} \label{Lemma 1}
We have
\[
c_\eps \le c^*
\]
In particular, by \eqref{22}, $c^*>0$, and Theorem \ref{Theorem 1}
(a) holds.
\end{lemma}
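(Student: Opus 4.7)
The plan is to establish the pointwise inequality $J_\eps(u) \le J(u)$ on $H^1_0(\Omega)$ and then pass directly to the min-max levels. By construction $\B : \R \to [0,1]$ vanishes on $(-\infty,0]$, so $\B((u-1)/\eps) = 0 = \goodchi_{\set{u > 1}}$ wherever $u \le 1$, while $\B((u-1)/\eps) \le 1 = \goodchi_{\set{u > 1}}$ wherever $u > 1$. Since the remaining terms in $J_\eps$ and $J$ are identical, integrating this bound over $\Omega$ yields $J_\eps(u) \le J(u)$ for every $u \in H^1_0(\Omega)$.

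Next I claim $\Gamma \subset \Gamma_\eps$: any $\gamma \in \Gamma$ satisfies $\gamma(0) = 0$ and $J_\eps(\gamma(1)) \le J(\gamma(1)) < 0$, so $\gamma$ is admissible for the min-max that defines $c_\eps$. For such a path the pointwise comparison gives $\max_{s \in [0,1]} J_\eps(\gamma(s)) \le \max_{s \in [0,1]} J(\gamma(s))$, and taking the infimum over $\Gamma$ produces
\[
c_\eps \le \inf_{\gamma \in \Gamma}\, \max_{s \in [0,1]} J_\eps(\gamma(s)) \le \inf_{\gamma \in \Gamma}\, \max_{s \in [0,1]} J(\gamma(s)) = c^*.
\]
If $\Gamma$ were empty then $c^* = +\infty$ and the bound would be vacuous, but $\Gamma \ne \emptyset$: for any nonnegative nontrivial $u_0 \in H^1_0(\Omega)$ the linear path $s \mapsto s t u_0$ lies in $\Gamma$ once $t$ is chosen large enough that $J(t u_0) < 0$, which is possible because $p > 2$ forces $J(t u_0) \to -\infty$. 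Combined with \eqref{22}, this yields $c^* \ge c_\eps \ge \rho^2 / 3 > 0$, which is part (a) of Theorem \ref{Theorem 1}.

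No substantive obstacle arises. The proof is essentially a one-line observation: the cut-off function $\B$ was designed so that its values lie in $[0,1]$, which is exactly what places $J_\eps$ below $J$ pointwise, and monotonicity of $\inf$-$\max$ under pointwise comparison of admissible functionals does the rest. The more delicate direction will be the reverse inequality $c^* \le \liminf_{\eps \to 0} c_\eps$, which is not claimed here and will require the uniform Lipschitz bound of Proposition \ref{Proposition 3} together with a careful passage to the limit.
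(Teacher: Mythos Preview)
Your proof is correct and follows exactly the same approach as the paper: establish the pointwise inequality $\B((t-1)/\eps) \le \goodchi_{\set{t>1}}$, deduce $J_\eps \le J$ on $H^1_0(\Omega)$, hence $\Gamma \subset \Gamma_\eps$, and conclude $c_\eps \le c^*$. Your additional verification that $\Gamma \ne \emptyset$ and your closing remarks are accurate and helpful, but the core argument is identical to the paper's.
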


\begin{proof}
Since $\B((t - 1)/\eps) \le \goodchi_{\set{t > 1}}$ for all $t$, $J_\eps(u) \le J(u)$ for all $u \in H^1_0(\Omega)$. So $\Gamma \subset \Gamma_\eps$ and
\[
c_\eps \le \max_{u \in \gamma([0,1])}\, J_\eps(u) \le \max_{u \in \gamma([0,1])}\, J(u) \quad \forall \gamma \in \Gamma. \QED
\]
\end{proof}

For $0 < \eps \le 1$, $u^\eps$ have the following uniform regularity properties.

\begin{lemma} \label{Lemma 3}
There exists a constant $C > 0$ such that, for $0 < \eps \le 1$,
\[
\norm{u^\eps} \le C.
\]
\end{lemma}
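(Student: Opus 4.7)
The plan is to replay the Palais-Smale argument of Lemma 2.1 with $u_j$ replaced by $u^\eps$, and to check that all the constants that appear come out independent of $\eps \in (0,1]$. The two inputs I need are $J_\eps(u^\eps) = c_\eps \le c^*$, coming from Lemma \ref{Lemma 1}, which gives an energy upper bound uniform in $\eps$, together with the critical-point equation $J_\eps'(u^\eps) = 0$.

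Stepping through that argument, the only $\eps$-dependent quantities are $\int_\Omega \B((u^\eps - 1)/\eps)\, dx$ appearing in \eqref{3.1}, and the penalty integral $\int_\Omega \frac{1}{\eps}\, \beta((u^\eps - 1)/\eps)\, v\, dx$ appearing in \eqref{3.2} when tested against $v = u^{\eps,+} := (u^\eps - 1)_+$. The first is bounded by $|\Omega|$ since $0 \le \B \le 1$. The second is the only subtle point: the support of $\beta((u^\eps - 1)/\eps)$ is contained in $\set{1 < u^\eps < 1 + \eps}$, where $u^{\eps,+} < \eps$, and $\beta \le 2$, so the integrand is pointwise bounded by $2$ and the integral is at most $2|\Omega|$, uniformly in $\eps$. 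This cancellation is the one place where the specific $\eps$-scaling of the regularization is really used.

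With these two uniform bounds in hand, I insert the decomposition $u^\eps = u^{\eps,+} + u^{\eps,-}$, where $u^{\eps,-} := 1 - (u^\eps - 1)_- \in H^1_0(\Omega)$, into \eqref{3.1} and \eqref{3.2} exactly as in the PS proof. Combining the resulting identities yields
\[
\left(1 - \frac{2}{p}\right) \|u^{\eps,+}\|^2 + \|u^{\eps,-}\|^2 \le C
\]
with $C$ depending only on $c^*$, $|\Omega|$, and $p$. Since $p > 2$ and $\nabla u^{\eps,+}$, $\nabla u^{\eps,-}$ have (essentially) disjoint supports, this bounds $\|u^\eps\|^2 = \|u^{\eps,+}\|^2 + \|u^{\eps,-}\|^2$ uniformly in $\eps$, as required. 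The only step that could plausibly cause trouble is the a priori singular $1/\eps$ factor in the penalty term, but the compact support of $\beta$ turns this into an $\eps$-uniform estimate for free, so no new ideas beyond Lemma 2.1 are needed.
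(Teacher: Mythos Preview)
Your argument is correct. The key observation --- that testing $J_\eps'(u^\eps)=0$ against $v=(u^\eps-1)_+$ produces a penalty term $\int_\Omega \tfrac{1}{\eps}\beta((u^\eps-1)/\eps)(u^\eps-1)_+\,dx$ which is pointwise bounded by $2$ on its support and hence by $2|\Omega|$ uniformly in $\eps$ --- is exactly what makes the Palais--Smale computation go through uniformly. Combining this with the energy bound $J_\eps(u^\eps)\le c^*$ and eliminating $\int(u^{\eps,+})^p$ gives $(1-2/p)\|u^{\eps,+}\|^2+\|u^{\eps,-}\|^2\le 2c^*+4|\Omega|/p$, as you claim.

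The paper takes a different route: rather than testing with $(u^\eps-1)_+$ and bounding the penalty term, it tests with $(u^\eps-1-\eps)_+$, which kills the $\beta$-term outright since $\beta((u^\eps-1)/\eps)$ vanishes on $\{u^\eps>1+\eps\}$. The price is a mismatch between the region $\{u^\eps>1+\eps\}$ appearing in the test identity and the full region $\{u^\eps>1\}$ appearing in the energy; the paper handles this by introducing an auxiliary parameter $\lambda>2/(p-2)$, forming the combination $J_\eps(u^\eps)-\tfrac{\lambda+1}{p\lambda}\cdot(\text{test identity})$, and estimating the leftover integrals over $\{0<u^\eps-1\le\eps\}$ and $\{\eps<u^\eps-1<(\lambda+1)\eps\}$ by $O(\eps^p|\Omega|)$. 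Your approach is shorter and avoids the parameter $\lambda$ entirely; the paper's approach avoids ever touching the singular $1/\eps$ factor but pays for it with more bookkeeping. Both lead to the same conclusion.
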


\begin{proof}
By Lemma \ref{Lemma 1},
\[
\int_\Omega \left[\frac{1}{2}\, |\nabla u^\eps|^2 + \B\left(\frac{u^\eps - 1}{\eps}\right) - \frac{1}{p}\, (u^\eps - 1)_+^p\right] dx \le c
\]
and hence
\begin{equation} \label{32}
\frac{1}{2} \int_\Omega |\nabla u^\eps|^2\, dx \le c + \frac{1}{p} \int_{\set{v_\eps > 0}} v_\eps^p\, dx,
\end{equation}
where $v_\eps = u^\eps - 1$. Testing \eqref{2} with $(u^\eps - 1 - \eps)_+$ gives
\begin{equation} \label{33}
\int_{\set{u^\eps > 1 + \eps}} |\nabla u^\eps|^2\, dx = \int_{\set{v_\eps > \eps}} v_\eps^{p-1}\, (v_\eps - \eps)\, dx.
\end{equation}
Fix $\lambda > 2/(p - 2)$. Multiplying \eqref{33} by $(\lambda + 1)/p \lambda$ and subtracting from \eqref{32} gives
\begin{multline} \label{34}
\frac{1}{2} \int_{\set{u^\eps \le 1 + \eps}} |\nabla u^\eps|^2\, dx + \frac{(p - 2) \lambda - 2}{2p \lambda} \int_{\set{u^\eps > 1 + \eps}} |\nabla u^\eps|^2\, dx\\[10pt]
\le c + \frac{1}{p} \int_{\set{0 < v_\eps \le \eps}} v_\eps^p\, dx + \frac{1}{p \lambda} \int_{\set{v_\eps > \eps}} v_\eps^{p-1}\, \big[(\lambda + 1)\, \eps - v_\eps\big]\, dx.
\end{multline}
The last integral is less than or equal to $\int_{\set{\eps < v_\eps < (\lambda + 1)\, \eps}} v_\eps^{p-1}\, \big[(\lambda + 1)\, \eps - v_\eps\big]\, dx$ and hence \eqref{34} gives
\[
\min \set{\frac{1}{2},\frac{(p - 2) \lambda - 2}{2p \lambda}} \int_\Omega |\nabla u^\eps|^2\, dx \le c + \frac{\eps^p \vol{\Omega}}{p} \left[1 + (\lambda + 1)^{p-1}\right].
\]
Since $\eps \le 1$, the conclusion follows.
\end{proof}

\begin{lemma} \label{Lemma 2}
There exists a constant $C > 0$ such that, for $0 < \eps \le 1$,
\begin{equation} \label{29}
\max_{x \in \Omega}\, u^\eps(x) \le C.
\end{equation}
\end{lemma}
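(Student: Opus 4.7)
The plan is to exploit the sign of the $\beta$-term in \eqref{2} to reduce Lemma \ref{Lemma 2} to a standard $L^\infty$-estimate for nonnegative subsolutions of a subcritical semilinear equation. Since $\beta \ge 0$ and $u^\eps > 0$ in $\Omega$ (by the strong maximum principle, as noted just before the lemma), \eqref{2} immediately yields the one-sided differential inequality
\[
-\Delta u^\eps \le (u^\eps - 1)_+^{p-1} \le (u^\eps)^{p-1} \quad \text{in } \Omega.
\]
The key point is that all explicit $\eps$-dependence on the right-hand side has been discarded, so the resulting inequality is $\eps$-independent; any estimate depending only on the structure of this inequality will automatically be uniform in $\eps$.

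From there I would carry out a Moser-type iteration. By Lemma \ref{Lemma 3} and Sobolev embedding, $u^\eps$ is uniformly bounded in $L^{2^*}$, with $2^* = 2N/(N-2)$. Testing the inequality above against $(u^\eps)^{2k+1}$ (truncated at height $L$ to justify integrability, sending $L \to \infty$ afterward) and applying Sobolev to the gradient term on the left gives the recursion
\[
\|u^\eps\|_{L^{2^*(k+1)}}^{\,2(k+1)} \le C_k\, \|u^\eps\|_{L^{p + 2k}}^{\,p + 2k}.
\]
Because $p < 2^*$, one may choose an increasing sequence $k_n$ with $p + 2k_n \le q_n := 2^*(k_{n-1}+1)$ and $q_n \to \infty$ geometrically (the ratio $q_{n+1}/q_n \to 2^*/2 > 1$). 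Since the constants $C_{k_n}$ grow only polynomially in $k_n$ while the $q_n$ grow geometrically, $\sum q_n^{-1} \log C_{k_n} < \infty$, and passing to the logarithm yields the desired uniform bound $\|u^\eps\|_\infty \le C$ depending only on $N$, $p$, $\vol{\Omega}$, and the $H^1_0$-bound of Lemma \ref{Lemma 3}.

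The main obstacle is purely bookkeeping: keeping the iteration constants uniform in $\eps$. This is automatic once the $\eps^{-1}\beta((u^\eps-1)/\eps)$ term has been dropped, so there is no genuine difficulty; the entire argument is driven by the subcriticality $p < 2^*$. An equivalent route would be to apply Stampacchia's De Giorgi-style truncation, testing against $(u^\eps - k)_+$ and iterating the resulting feedback inequality on $|\set{u^\eps > k}|$, which produces the same bound via Stampacchia's lemma.
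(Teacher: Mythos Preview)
Your proposal is correct and follows essentially the same approach as the paper: both drop the nonnegative $\beta$-term to obtain the $\eps$-independent inequality $-\Delta u^\eps \le (u^\eps)^{p-1}$, and then invoke a subcritical $L^q \to L^\infty$ estimate together with the uniform $H^1_0$-bound from Lemma \ref{Lemma 3}. The only difference is cosmetic: the paper quotes a ready-made bound from Bonforte--Grillo--V\'azquez rather than writing out the Moser iteration, and it fixes an exponent $q$ with $N(p-2)/2 < q < 2N/(N-2)$; your sketch supplies that iteration by hand (and you should note separately that the case $N=2$ is handled by the same argument with $2^\ast$ replaced by any finite exponent).
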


\begin{proof}
Since $p < 2N/(N - 2)$, we have $N(p - 2)/2 < 2N/(N - 2)$. Fix $N(p - 2)/2 < q < 2N/(N - 2)$. Since
\[
- \Delta u^\eps = (u^\eps - 1)_+^{p-1} - \frac{1}{\eps}\, \beta\left(\frac{u^\eps - 1}{\eps}\right) \le (u^\eps)^{p-1},
\]
there exists a constant $C > 0$ such that
\[
\norm[\infty]{u^\eps} \le C \norm[q]{u^\eps}^{2q/(2q - N(p-2))}
\]
by Bonforte et al. \cite[Theorem 3.1]{BonforteGrilloVazquez}. Since $u^\eps$ is bounded in $L^q(\Omega)$ by the Sobolev imbedding theorem and Lemma \ref{Lemma 3}, the conclusion follows.
\end{proof}

By Lemma \ref{Lemma 2}, $(u^\eps - 1)_+^{p-1} \le A_0$ for some constant $A_0 > 0$. Let $\varphi_0 > 0$ be the solution of
\[
\left\{\begin{aligned}
- \Delta \varphi_0 & = A_0 && \text{in } \Omega\\[10pt]
\varphi_0 & = 0 && \text{on } \bdry{\Omega}.
\end{aligned}\right.
\]

\begin{lemma} \label{Lemma 4}
For $0 < \eps \le 1$,
\[
u^\eps(x) \le \varphi_0(x) \quad \forall x \in \Omega,
\]
in particular, $\set{u^\eps \ge 1} \subset \set{\varphi_0 \ge 1} \strictsubset \Omega$.
\end{lemma}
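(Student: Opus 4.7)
The plan is a direct comparison argument. By Lemma \ref{Lemma 2}, the quantity $(u^\eps-1)_+^{p-1}$ is bounded uniformly in $\eps \in (0,1]$ by some constant $A_0$, which defines $\varphi_0$. Since $\beta \ge 0$, I would observe that
\[
-\Delta u^\eps = (u^\eps - 1)_+^{p-1} - \frac{1}{\eps}\, \beta\left(\frac{u^\eps - 1}{\eps}\right) \le A_0 = -\Delta \varphi_0 \quad \text{in } \Omega,
\]
so the difference $w := u^\eps - \varphi_0 \in H^1_0(\Omega)$ satisfies $-\Delta w \le 0$ weakly, with $w = 0$ on $\bdry{\Omega}$. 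Testing this inequality against $w_+ \in H^1_0(\Omega)$ yields $\int_\Omega |\nabla w_+|^2\, dx \le 0$, so $w_+ \equiv 0$ and therefore $u^\eps \le \varphi_0$ pointwise. (Equivalently, one could invoke the weak maximum principle directly for the subharmonic function $w$.)

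For the inclusion statement, if $u^\eps(x) \ge 1$ then $\varphi_0(x) \ge u^\eps(x) \ge 1$, giving $\set{u^\eps \ge 1} \subset \set{\varphi_0 \ge 1}$. Finally, $\varphi_0$ is continuous up to $\closure{\Omega}$ by standard elliptic regularity and vanishes on $\bdry{\Omega}$, so $\varphi_0 < 1$ in some open neighborhood of $\bdry{\Omega}$, which gives $\set{\varphi_0 \ge 1} \strictsubset \Omega$.

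There is no serious obstacle here; the only point to be careful about is the sign of the reaction term $\frac{1}{\eps}\beta((u^\eps-1)/\eps)$, which by construction is nonnegative and is precisely what makes the comparison go through.
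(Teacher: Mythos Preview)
Your proof is correct and follows the same approach as the paper: the paper simply notes that $\beta \ge 0$ gives $-\Delta u^\eps \le A_0 = -\Delta \varphi_0$ and invokes the maximum principle, whereas you spell out the testing-against-$w_+$ argument and the reason $\set{\varphi_0 \ge 1} \strictsubset \Omega$. There is nothing to correct.
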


\begin{proof}
Since $\beta(t) \ge 0$ for all $t$,
\[
- \Delta u^\eps \le (u^\eps - 1)_+^{p-1} \le A_0 = - \Delta \varphi_0,
\]
so $u^\eps \le \varphi_0$ by the maximum principle.
\end{proof}

\begin{lemma} \label{Lemma 5}  There is a constant $C$ such
that for $r>0$ and $0 < \eps \le 1$, if $B_r(x_0) \subset\Omega$, then
\[
\max_{x \in B_{r/2}(x_0)}\, |\nabla u^\eps(x)| \le C/r
\]
\end{lemma}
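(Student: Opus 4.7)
The plan is to apply the Caffarelli--Jerison--Kenig uniform Lipschitz estimate from \cite{MR1906591}, which is recorded later in this paper as Proposition \ref{Proposition 3}. The equation \eqref{2} satisfied by $u^\eps$ can be rearranged as
\[
-\Delta u^\eps + \frac{1}{\eps}\,\beta\!\left(\frac{u^\eps - 1}{\eps}\right) \,=\, (u^\eps - 1)_+^{p-1}.
\]
After the harmless translation $\tilde u^\eps := u^\eps - 1$, which moves the transition region to $\{\tilde u^\eps = 0\}$, the left-hand side is exactly the regularized one-phase free boundary operator that CJK treat; the singular term $\eps^{-1}\beta((u^\eps-1)/\eps)$ is supported in the thin strip $\{0 < u^\eps - 1 < \eps\}$ and plays the role of a regularization of $2\,\delta_{\partial\{u>1\}}$.

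Before invoking the estimate I would verify its hypotheses. Nonnegativity of $u^\eps$ follows from the maximum principle, as noted just after the statement of \eqref{2}. The uniform $L^\infty$-bound $\|u^\eps\|_\infty \le C$ is supplied by Lemma \ref{Lemma 2}, and this in turn implies that the right-hand side $(u^\eps - 1)_+^{p-1}$ is uniformly bounded in $L^\infty(\Omega)$ independently of $\eps \in (0,1]$. The ambient Sobolev bound from Lemma \ref{Lemma 3} is also available. With these ingredients in place, Proposition \ref{Proposition 3} yields precisely an interior gradient bound of the scale-invariant form $\max_{B_{r/2}(x_0)}|\nabla u^\eps| \le C/r$ on every ball $B_r(x_0)\subset\Omega$, with $C$ depending only on $N$, $p$, $\beta$, $\Omega$, and the uniform $L^\infty$-bound for $u^\eps$.

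The main obstacle is the extra bounded nonlinear source $(u^\eps - 1)_+^{p-1}$, which is absent from the simplest CJK statement $\Delta u^\eps = \beta_\eps(u^\eps)$. However, the CJK argument — based on an Alt--Caffarelli--Friedman type monotonicity formula, harmonic replacement, and a blow-up/compactness scheme — is stable under the addition of a uniformly bounded right-hand side, because such a term contributes only lower-order $C^{1,\alpha}$ perturbations under rescaling (alternatively, one could split $u^\eps = v^\eps + w^\eps$ with $-\Delta v^\eps = (u^\eps-1)_+^{p-1}$ in $B_r(x_0)$ and zero boundary data, whose gradient is controlled by standard elliptic theory, and apply CJK to $w^\eps$). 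Since Proposition \ref{Proposition 3} is stated in the form that already accommodates this perturbation, the conclusion follows by a direct appeal to that proposition.
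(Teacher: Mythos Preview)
Your proposal is correct and follows essentially the same route as the paper: verify the hypotheses of Proposition~\ref{Proposition 3} using the uniform $L^\infty$ bound from Lemma~\ref{Lemma 2} (which bounds both $u^\eps$ and the source $(u^\eps-1)_+^{p-1}$), and then invoke that proposition directly. The paper's proof is somewhat more streamlined: it simply reads off the two one-sided bounds $\Delta u^\eps \le \tfrac{2}{\eps}\goodchi_{\{|u^\eps-1|<\eps\}}$ (from $0\le\beta\le2$) and $-\Delta u^\eps \le A_0$ (from $\beta\ge0$ and Lemma~\ref{Lemma 2}), combines them into the single distributional inequality $\pm\Delta u^\eps \le \max\{2,A_0\}\bigl(\tfrac{1}{\eps}\goodchi_{\{|u^\eps-1|<\eps\}}+1\bigr)$, and applies Proposition~\ref{Proposition 3}. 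Your detour through a splitting $u^\eps=v^\eps+w^\eps$ and the remarks on stability of the CJK monotonicity argument are unnecessary, since---as you yourself note at the end---Proposition~\ref{Proposition 3} is already stated in the form that absorbs the bounded forcing term.
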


\begin{proof}
Since $\beta(t) \le 2$ for all $t$,
\[
\Delta u^\eps \le \frac{1}{\eps}\, \beta\left(\frac{u^\eps - 1}{\eps}\right) \le \frac{2}{\eps}\, \goodchi_{\set{|u^\eps - 1| < \eps}}(x),
\]
and since $\beta(t) \ge 0$ for all $t$,
\[
- \Delta u^\eps \le (u^\eps - 1)_+^{p-1} \le A_0,
\]
so
\[
\pm \Delta u^\eps \le \max \set{2,A_0} \left(\frac{1}{\eps}\, \goodchi_{\set{|u^\eps - 1| < \eps}}(x) + 1\right).
\]
Since $u^\eps$ is also uniformly bounded in $L^2(\Omega)$ by 
Lemma \ref{Lemma 2}, the conclusion follows from the following
result of Caffarelli, Jerison and Kenig. 
\begin{proposition}[{\cite[Theorem 5.1]{MR1906591}}] \label{Proposition 3}
Suppose that $u$ is a Lipschitz continuous function on $B_1(0) \subset \R^N$ satisfying the distributional inequalities
\[
\pm \Delta u \le A \left(\frac{1}{\eps}\, \goodchi_{\set{|u - 1| < \eps}}(x) + 1\right)
\]
for some constants $A > 0$ and $0 < \eps \le 1$. Then there exists a constant $C > 0$, depending on $N$, $A$ and $\int_{B_1(0)} u^2\, dx$, but not on $\eps$, such that
\[
\max_{x \in B_{1/2}(0)}\, |\nabla u(x)| \le C.
\]
\end{proposition}
\end{proof}

\section{The limit is a generalized solution}\label{sec: generalized}


\begin{definition} \label{def: generalized}  We say a locally Lipschitz function $u$ defined
on $\Omega$ is a generalized solution to \eqref{1} if 
$u\in C^0(\bar{\Omega}) \cap 
 H^1_0(\Omega) \cap C^2(\Omega \setminus \bdry{\set{u > 1}})$ and satisfies 
\[
- \Delta u = (u - 1)_+^{p-1} \quad \mbox{on} \quad 
\Omega \setminus \bdry{\set{u > 1}}
\]
in the classical sense, and $u=0$ on $\partial \Omega$. Moreover, 
for all $\varphi \in C^1_0(\Omega,\R^N)$ such that $u \ne 1$
a.e.\! on the support of $\varphi$,
\[
\lim_{\delta^+ \searrow 0}\, \int_{\set{u = 1 + \delta^+}} \left(|\nabla u|^2 - 2\right) \varphi \cdot n^+\, d\sigma - \lim_{\delta^- \searrow 0}\, \int_{\set{u = 1 - \delta^-}} |\nabla u|^2\, \varphi \cdot n^-\, d\sigma = 0,
\]
where $n^\pm$ are the outward unit normals to $\bdry{\set{u > 1 \pm
    \delta^\pm}}$. (The sets $\set{u = 1 \pm \delta^\pm}$ are smooth
hypersurfaces for a.a.\! $\delta^\pm > 0$ by Sard's theorem and the
above limits are taken through such $\delta^\pm$.)
\end{definition}
In particular, a generalized solution $u$ to \eqref{1}
satisfies the free boundary condition in the classical sense on any
smooth portion of the free boundary $\bdry{\set{u > 1}}$.

Let $\eps_j \searrow 0$, let $u_j$ be the critical point of $J_{\eps_j}$ 
obtained in Lemma \ref{Lemma 7}, and set $c_j = c_{\eps_j} = J_{\eps_j}(u_j)$. 
\begin{lemma} \label{Lemma 6}
There exists a locally Lipschitz continuous function $u \in
C^0(\closure{\Omega}) \cap H^1_0(\Omega)$ 
such that, for a suitable sequence $\eps_j$, 
\begin{enumroman}
\item \label{6.i} $u_j \to u$ uniformly on $\closure{\Omega}$,
\item \label{6.ii} $u_j \to u$ strongly in $H^1_0(\Omega)$,
\item \label{6.iv} $J(u) \le \varliminf c_j \le \varlimsup c_j \le J(u) + \vol{\set{u = 1}}$.
\end{enumroman}
Moreover, $u$ is a nontrivial generalized solution of problem \eqref{1}.
\end{lemma}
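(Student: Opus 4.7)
My plan is to exploit the uniform regularity of Section 2 to extract a convergent subsequence and then verify the listed properties one by one. The principal difficulty throughout is the singular term $\tfrac{1}{\eps_j}\beta((u_j-1)/\eps_j)$: its distributional limit is a Radon measure concentrated on $\{u=1\}$, so every argument must avoid probing this set directly.

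\textbf{Compactness and strong $H^1_0$ convergence.} Lemmas \ref{Lemma 2}, \ref{Lemma 3}, and \ref{Lemma 5} give uniform $L^\infty$, $H^1_0$, and interior Lipschitz bounds, and Lemma \ref{Lemma 4} supplies the boundary barrier $0\le u_j\le\varphi_0$ with $\varphi_0\in C^0(\closure{\Omega})$ vanishing on $\bdry{\Omega}$; Arzelà--Ascoli then produces a subsequence with $u_j\to u$ uniformly on $\closure{\Omega}$ for some $u\in C^0(\closure{\Omega})$ that is locally Lipschitz in $\Omega$ and vanishes on $\bdry{\Omega}$, together with $u_j\rightharpoonup u$ weakly in $H^1_0(\Omega)$, giving \ref{6.i}. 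To upgrade to \ref{6.ii} I would test \eqref{2} against $u_j-u\in H_0^1(\Omega)$ to obtain
\[
\|u_j\|^2-\int_\Omega\nabla u_j\cdot\nabla u\,dx=\int_\Omega(u_j-1)_+^{p-1}(u_j-u)\,dx-\int_\Omega\tfrac{1}{\eps_j}\beta\!\left(\tfrac{u_j-1}{\eps_j}\right)(u_j-u)\,dx.
\]
The first right-hand integral vanishes by the uniform bound on $(u_j-1)_+^{p-1}$ and uniform convergence; for the second, a fixed cutoff $\phi\in C^\infty_c(\Omega)$ with $\phi\equiv 1$ on $\{\varphi_0\ge 1/2\}$ (which contains $\{1<u_j<1+\eps_j\}$ once $\eps_j<1/2$) tested in \eqref{2} yields the uniform mass bound $\int_\Omega\tfrac{1}{\eps_j}\beta((u_j-1)/\eps_j)\,dx\le C$, so the uniform smallness of $u_j-u$ sends the second integral to $0$ as well; combined with weak convergence this gives $\|u_j\|\to\|u\|$ and hence strong convergence.

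\textbf{Energy level and nontriviality.} Strong $H^1_0$ and uniform convergence handle every term of $J_{\eps_j}(u_j)$ except $\int\B((u_j-1)/\eps_j)$. For this term, the sandwich $\goodchi_{\{u_j\ge 1+\eps_j\}}\le\B((u_j-1)/\eps_j)\le\goodchi_{\{u_j>1\}}$ together with the pointwise estimates $\liminf_j\goodchi_{\{u_j\ge 1+\eps_j\}}\ge\goodchi_{\{u>1\}}$ and $\limsup_j\goodchi_{\{u_j>1\}}\le\goodchi_{\{u\ge 1\}}$ (both immediate from uniform convergence of $u_j$ to $u$), combined with Fatou and reverse Fatou against the $L^1$ dominator $\goodchi_{\{\varphi_0\ge 1\}}$, deliver $\vol{\{u>1\}}\le\liminf\int\B\le\limsup\int\B\le\vol{\{u\ge 1\}}$; substituting into $J_{\eps_j}(u_j)$ yields \ref{6.iv}. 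Nontriviality is then immediate: if $u\equiv 0$ then $J(u)+\vol{\{u=1\}}=0$, contradicting $c_j\ge\rho^2/3>0$ from \eqref{22}.

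\textbf{Generalized solution.} On $\{u>1\}$, uniform convergence gives $u_j\ge 1+\eps_j$ on compact subsets, so $\beta=0$ and $-\Delta u_j=(u_j-1)^{p-1}$ classically, and Schauder theory passes to the limit. On any subdomain $V\strictsubset\Omega\setminus\closure{\{u>1\}}$ on which $\max_{\closure{V}}u<1$, uniform convergence forces $u_j<1$ on $V$ for $j$ large, so $\Delta u_j=0$ there; exhausting by such $V$ together with triviality on $\interior{\{u=1\}}$ shows $u$ is classically harmonic on $\Omega\setminus\closure{\{u>1\}}$. For the free-boundary condition I would pass to the limit in the divergence-free stress--energy tensor
\[
T^{\eps_j}_{ik}=\partial_i u_j\,\partial_k u_j-\tfrac{1}{2}|\nabla u_j|^2\delta_{ik}+\tfrac{1}{p}(u_j-1)_+^p\,\delta_{ik}-\B\!\left(\tfrac{u_j-1}{\eps_j}\right)\delta_{ik},
\]
which satisfies $\partial_k T^{\eps_j}_{ik}=0$ in $\Omega$ as a consequence of \eqref{2}. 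For $\varphi\in C^1_0(\Omega,\R^N)$ with $u\ne 1$ a.e.\! on $\supp\varphi$, strong $H^1_0$ convergence, uniform convergence, and dominated convergence on the $\B$-term (where the hypothesis $u\ne 1$ a.e.\! is precisely what sidesteps the singular limit) pass $\int T^{\eps_j}_{ik}\partial_k\varphi_i\,dx=0$ to the limit, giving $\int T_{ik}\partial_k\varphi_i\,dx=0$ with $\B$ replaced by $\goodchi_{\{u>1\}}$. Splitting this identity over $\{u>1+\delta^+\}\cup\{u<1-\delta^-\}$ for regular values $\delta^\pm$ (by Sard), using $\partial_k T_{ik}=0$ on these open sets from the classical PDE already established, and computing $T_{ik}n_k^\pm$ on the smooth level sets $\{u=1\pm\delta^\pm\}$ (where $\nabla u$ is parallel to $n^\pm$) reduces the identity to precisely the boundary expression of Definition \ref{def: generalized}, yielding the free boundary condition as $\delta^\pm\searrow 0$.
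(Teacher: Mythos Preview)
Your argument tracks the paper's closely for \ref{6.i}, \ref{6.iv}, nontriviality, and the free boundary condition (your stress--energy identity $\partial_k T^{\eps_j}_{ik}=0$ is exactly the Pohozaev-type identity the paper obtains by multiplying \eqref{7} by $\nabla u_j\cdot\varphi$ and rewriting as a divergence). Two points deserve comment.

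\textbf{Strong convergence \ref{6.ii}: a cleaner route.} Your approach here differs from the paper's and is nicer. The paper argues via boundary regularity: since $\set{u_j\ge 1}$ stays uniformly away from $\bdry{\Omega}$, the $u_j$ are uniformly $C^2$ near $\bdry{\Omega}$, so $\partial u_j/\partial n\to\partial u/\partial n$ on $\bdry{\Omega}$; it then multiplies \eqref{7} by $u_j-1$, integrates by parts using $\beta((t-1)/\eps_j)(t-1)\ge 0$, and separately derives the analogous identity for $u$ by testing with $(u-1\mp\eps)_\pm$. Your method---test \eqref{2} with $u_j-u$ and control the singular term via the uniform mass bound $\int_\Omega \eps_j^{-1}\beta((u_j-1)/\eps_j)\,dx\le C$ obtained from a cutoff---avoids the boundary-regularity detour entirely and is more robust.

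\textbf{A gap in harmonicity on $\interior{\set{u\le 1}}$.} Your claim that ``exhausting by such $V$ together with triviality on $\interior{\set{u=1}}$ shows $u$ is classically harmonic on $\Omega\setminus\closure{\set{u>1}}$'' does not cover all of $\interior{\set{u\le 1}}$: points $x$ with $u(x)=1$, $x\in\interior{\set{u\le 1}}$, but $x\notin\interior{\set{u=1}}$ (i.e.\ $x\in\bdry{\set{u<1}}$) are not in $\set{u<1}\cup\interior{\set{u=1}}$, and nothing you wrote rules them out. The paper handles this via an Alt--Caffarelli Radon-measure argument: $-\Delta u\le A_0$ weakly in $\Omega$ (from $\beta\ge 0$), while $\mu:=\Delta(u-1)_-$ is a nonnegative measure supported on $\bdry{\set{u<1}}$; together these give $0\le -\Delta u\le A_0$ on $\interior{\set{u\le 1}}$, hence $u\in W^{2,q}_{\loc}$ there, forcing $\mu$ to vanish in $\interior{\set{u\le 1}}$. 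You can patch this more directly with tools you already have: your mass bound gives $\eps_j^{-1}\beta((u_j-1)/\eps_j)\rightharpoonup\nu$ weakly as measures for some $\nu\ge 0$, so $-\Delta u=(u-1)_+^{p-1}-\nu$ in $\Omega$, hence $\Delta u=\nu\ge 0$ on $\interior{\set{u\le 1}}$; the strong maximum principle for subharmonic functions then forces any interior point with $u=1$ to lie in $\interior{\set{u=1}}$, so $\interior{\set{u\le 1}}=\set{u<1}\cup\interior{\set{u=1}}$ after all.
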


\begin{proof}
First we prove \ref{6.i}. Since $\seq{u_j}$ is bounded in
$H^1_0(\Omega)$ by Lemma \ref{Lemma 3}, we may choose $\eps_j$ 
so that $u_j$ converges weakly in $H^1_0(\Omega)$ to some $u$.
Because, by Lemmas \ref{Lemma 2} and \ref{Lemma 5},  
$\seq{u_j}$ is uniformly bounded and uniformly locally Lipschitz,
we may also choose the sequence so that $u_j \to u$ uniformly on 
compact subsets of $\Omega$, and $u$ is locally Lipschitz continuous. 
Since $0 < u_j \le \varphi_0$ by Lemma \ref{Lemma 4}, we have
$0 \le u \le \varphi_0$, 
and hence $|u_j - u| \le \varphi_0$. Thus $u$ extends continuously to
$\closure{\Omega}$ with zero boundary values and $u_j \to u$ uniformly
on $\closure{\Omega}$.

Next we show that $u$ satisfies the equation $- \Delta u = (u -
1)_+^{p-1}$ in $\set{u \ne 1}$. Let $\varphi \in C^\infty_0(\set{u >
  1})$. Then $u \ge 1 + 2\, \eps$ on the support of $\varphi$ for some
$\eps > 0$. For all sufficiently large $j$, $\eps_j < \eps$ and $|u_j
- u| < \eps$ by \ref{6.i}. Then $u_j \ge 1 + \eps_j$ on the support of
$\varphi$, so testing
\begin{equation} \label{7}
- \Delta u_j = (u_j - 1)_+^{p-1} - \frac{1}{\eps_j}\, \beta\left(\frac{u_j - 1}{\eps_j}\right)
\end{equation}
with $\varphi$ gives
\[
\int_\Omega \nabla u_j \cdot \nabla \varphi\, dx = \int_\Omega (u_j - 1)^{p-1}\, \varphi\, dx.
\]
Passing to the limit gives
\begin{equation} \label{8}
\int_\Omega \nabla u \cdot \nabla \varphi\, dx = \int_\Omega (u - 1)^{p-1}\, \varphi\, dx
\end{equation}
since $u_j$ converges to $u$ weakly in $H^1_0(\Omega)$ and uniformly
in $\Omega$. This then holds for all $\varphi \in H^1_0(\set{u > 1})$
by density, and hence $u$ is a classical solution of $- \Delta u = (u
- 1)^{p-1}$ in $\set{u > 1}$. A similar argument shows that $u$ is
harmonic in $\set{u < 1}$.

Now we show that $u$ is harmonic in $\interior{\set{u \le 1}}$. Testing \eqref{7} with any nonnegative $\varphi \in C^\infty_0(\Omega)$ gives
\[
\int_\Omega \nabla u_j \cdot \nabla \varphi\, dx \le A_0 \int_\Omega \varphi\, dx
\]
since $\beta(t) \ge 0$ for all $t$ and $(u_j - 1)_+^{p-1} \le A_0$, and passing to the limit gives
\[
\int_\Omega \nabla u \cdot \nabla \varphi\, dx \le A_0 \int_\Omega \varphi\, dx.
\]
So
\begin{equation} \label{19}
- \Delta u \le A_0 \quad \text{in } \Omega
\end{equation}
in the weak sense. On the other hand, since $u$ is harmonic in $\set{u < 1}$, $\mu := \Delta (u - 1)_-$ is a nonnegative Radon measure supported on $\Omega \cap \bdry{\set{u < 1}}$ by Alt and Caffarelli \cite[Remark 4.2]{MR618549}, so
\begin{equation} \label{20}
- \Delta u = \mu \ge 0 \quad \text{in } \set{u \le 1}.
\end{equation}
It follows from \eqref{19} and \eqref{20} that $u \in W^{2,\, q}_\loc(\interior{\set{u \le 1}}),\, 1 < q < \infty$ and hence $\mu$ is actually supported on $\Omega \cap \bdry{\set{u < 1}} \cap \bdry{\set{u > 1}}$, so $u$ is harmonic in $\interior{\set{u \le 1}}$.

Since $u_j$ 
tends weakly to $u$ in $H^1_0(\Omega)$, $\norm{u} \le
\liminf \norm{u_j}$.  Thus to prove \ref{6.ii}, it suffices to show 
that $\limsup \norm{u_j} \le \norm{u}$.  The majorant $\varphi_0$ shows
that $\set{u_j\ge 1}$ is a fixed distance from $\partial \Omega$,
uniformly in $j$.  It follows from standard regularity
arguments that $u_j$ is uniformly in $C^2$ in a sufficiently
small, fixed neighborhood of $\partial \Omega$. 
Therefore, after replacing $u_j$ with a subsequence, we
may assume that $\partial u_j/\partial n \to \partial
u/\partial n$ uniformly on $\bdry{\Omega}$, where $n$ is the outward unit
normal. Multiplying \eqref{7} by $u_j - 1$, integrating by parts, and
noting that $\beta((t - 1)/\eps_j)\, (t - 1) \ge 0$ for all $t$ gives
\begin{equation} \label{9}
\int_\Omega |\nabla u_j|^2\, dx \le \int_\Omega (u_j - 1)_+^p\, dx - \int_{\bdry{\Omega}} \frac{\partial u_j}{\partial n}\, d\sigma \to \int_\Omega (u - 1)_+^p\, dx - \int_{\bdry{\Omega}} \frac{\partial u}{\partial n}\, d\sigma.
\end{equation}
Fix $0 < \eps < 1$. Taking $\varphi = (u - 1 - \eps)_+$ in \eqref{8} gives
\begin{equation} \label{10}
\int_{\set{u > 1 + \eps}} |\nabla u|^2\, dx = \int_\Omega (u - 1)_+^{p-1}\, (u - 1 - \eps)_+\, dx,
\end{equation}
and integrating $(u - 1 + \eps)_-\, \Delta u = 0$ over $\Omega$ gives
\begin{equation} \label{11}
\int_{\set{u < 1 - \eps}} |\nabla u|^2\, dx = - (1 - \eps) \int_{\bdry{\Omega}} \frac{\partial u}{\partial n}\, d\sigma.
\end{equation}
Adding \eqref{10} and \eqref{11}, and letting $\eps \searrow 0$ gives
\[
\int_\Omega |\nabla u|^2\, dx = \int_\Omega (u - 1)_+^p\, dx - \int_{\bdry{\Omega}} \frac{\partial u}{\partial n}\, d\sigma.
\]
This together with \eqref{9} gives
\[
\limsup \int_\Omega |\nabla u_j|^2\, dx \le \int_\Omega |\nabla u|^2\, dx.
\]

To prove \ref{6.iv}, write
\begin{multline*}
J_{\eps_j}(u_j) = \int_\Omega \left[\frac{1}{2}\, |\nabla u_j|^2 + \B\left(\frac{u_j - 1}{\eps_j}\right) \goodchi_{\set{u \ne 1}}(x) - \frac{1}{p}\, (u_j - 1)_+^p\right] dx\\[10pt]
+ \int_{\set{u = 1}} \B\left(\frac{u_j - 1}{\eps_j}\right) dx.
\end{multline*}
Since $\B((u_j - 1)/\eps_j)\, \goodchi_{\set{u \ne 1}}$ converges pointwise to $\goodchi_{\set{u > 1}}$ and is bounded by $1$, the first integral converges to $J(u)$ by \ref{6.i} and \ref{6.ii}. Since $0 \le \B(t) \le 1$ for all $t$,
\[
0 \le \int_{\set{u = 1}} \B\left(\frac{u_j - 1}{\eps_j}\right) dx \le \vol{\set{u = 1}}.
\]
\ref{6.iv} follows.

By \ref{6.iv} and \eqref{22},
\[
J(u) + \vol{\set{u = 1}} \ge \frac{\rho^2}{3} > 0
\]
and hence $u$ is nontrivial.

Finally we show that $u$ satisfies the generalized free boundary
condition, i.e., for all $\varphi \in C^1_0(\Omega,\R^N)$ such that $u
\ne 1$ a.e.\! on the support of $\varphi$,
\begin{equation} \label{23}
\lim_{\delta^+ \searrow 0}\, \int_{\set{u = 1 + \delta^+}} \left(2 - |\nabla u|^2\right) \varphi \cdot n\, d\sigma - \lim_{\delta^- \searrow 0}\, \int_{\set{u = 1 - \delta^-}} |\nabla u|^2\, \varphi \cdot n\, d\sigma = 0,
\end{equation}
where $n$ is the outward unit normal to $\set{1 - \delta^- < u < 1 + \delta^+}$. Multiplying \eqref{7} by $\nabla u_j \cdot \varphi$ and integrating over $\set{1 - \delta^- < u < 1 + \delta^+}$ gives
\begin{eqnarray*}
0 & = & \int_{\set{1 - \delta^- < u < 1 + \delta^+}} \left[- \Delta u_j + \frac{1}{\eps_j}\, \beta\left(\frac{u_j - 1}{\eps_j}\right) - (u_j - 1)_+^{p-1}\right] \nabla u_j \cdot \varphi\, dx\\[10pt]
& = & \int_{\set{1 - \delta^- < u < 1 + \delta^+}} \bigg[\divg \left(\frac{1}{2}\, |\nabla u_j|^2\, \varphi - (\nabla u_j \cdot \varphi)\, \nabla u_j\right) + \nabla u_j\, D\varphi \cdot \nabla u_j\\[10pt]
&& - \frac{1}{2}\, |\nabla u_j|^2\, \divg \varphi + \nabla \B\left(\frac{u_j - 1}{\eps_j}\right) \cdot \varphi - \frac{1}{p}\, \nabla (u_j - 1)_+^p \cdot \varphi\bigg]\, dx\\[10pt]
& = & \frac{1}{2} \int_{\set{u = 1 + \delta^+} \bigcup \set{u = 1 - \delta^-}} \left[|\nabla u_j|^2\, \varphi - 2\, (\nabla u_j \cdot \varphi)\, \nabla u_j + 2\, \B\left(\frac{u_j - 1}{\eps_j}\right) \varphi\right] \cdot n\, d\sigma\\[10pt]
&& - \frac{1}{p} \int_{\set{u = 1 + \delta^+} \bigcup \set{u = 1 - \delta^-}} (u_j - 1)_+^p\, \varphi \cdot n\, d\sigma\\[10pt]
&& - \int_{\set{1 - \delta^- < u < 1 + \delta^+}} \left[\B\left(\frac{u_j - 1}{\eps_j}\right) - \frac{1}{p}\, (u_j - 1)_+^p\right] \divg \varphi\, dx\\[10pt]
&& + \int_{\set{1 - \delta^- < u < 1 + \delta^+}} \left(\nabla u_j\, D\varphi \cdot \nabla u_j - \frac{1}{2}\, |\nabla u_j|^2\, \divg \varphi\right) dx\\[10pt]
& =: & \frac{I_1}{2} - \frac{I_2}{p} - I_3 + I_4.
\end{eqnarray*}
Since $u_j \to u$ uniformly on $\closure{\Omega}$, strongly in $H^1_0(\Omega)$, and locally in $C^1(\set{u \ne 1})$,
\begin{eqnarray*}
I_1 & \to & \int_{\set{u = 1 + \delta^+} \bigcup \set{u = 1 - \delta^-}} \left(|\nabla u|^2\, \varphi - 2\, (\nabla u \cdot \varphi)\, \nabla u\right) \cdot n\, d\sigma + \int_{\set{u = 1 + \delta^+}} 2\, \varphi \cdot n\, d\sigma\\[10pt]
& = & \int_{\set{u = 1 + \delta^+}} \left(2 - |\nabla u|^2\right) \varphi \cdot n\, d\sigma - \int_{\set{u = 1 - \delta^-}} |\nabla u|^2\, \varphi \cdot n\, d\sigma
\end{eqnarray*}
since $n = \pm \nabla u/|\nabla u|$ on $\set{u = 1 \pm \delta^\pm}$, and
\[
I_2 \to \int_{\set{u = 1 + \delta^+}} (u - 1)_+^p\, \varphi \cdot n\, d\sigma = (\delta^+)^p \int_{\set{u = 1 + \delta^+}} \varphi \cdot n\, d\sigma = (\delta^+)^p \int_{\set{u < 1 + \delta^+}} \divg \varphi\, dx,
\]
which goes to zero as $\delta^+ \searrow 0$. Since $|\B((u_j - 1)/\eps_j)| \le 1$,
\begin{eqnarray*}
|I_3| & \le & \int_{\set{1 - \delta^- < u < 1 + \delta^+}} \left[1 + \frac{1}{p}\, (u_j - 1)_+^p\right] |\divg \varphi|\, dx\\[10pt]
& \to & \int_{\set{1 - \delta^- < u < 1 + \delta^+}} \left[1 + \frac{1}{p}\, (u - 1)_+^p\right] |\divg \varphi|\, dx,
\end{eqnarray*}
and
\[
I_4 \to \int_{\set{1 - \delta^- < u < 1 + \delta^+}} \left(\nabla u\, D\varphi \cdot \nabla u - \frac{1}{2}\, |\nabla u|^2\, \divg \varphi\right) dx.
\]
The last two integrals go to zero as $\delta^\pm \searrow 0$ since $\vol{\set{u = 1} \cap \supp \varphi} = 0$, so first letting $j \to \infty$ and then letting $\delta^\pm \searrow 0$ gives \eqref{23}.
\end{proof}

\section{The Nehari manifold and non-degeneracy} \label{sec: Nehari nondegeneracy}

\begin{lemma}\label{lem: gen.Nehari}  Every nonzero
generalized solution $u$ 
to \eqref{1} belongs to the Nehari manifold $\M$ and satisfies
$J(u) >0$.  
\end{lemma}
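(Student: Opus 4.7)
The plan is to sidestep the free boundary condition entirely and derive the Nehari identity by integrating the classical equation $-\Delta u = (u - 1)^{p-1}$ on $\set{u > 1 + \delta}$ against the test function $u - 1 - \delta$, then letting $\delta \searrow 0^+$. With this identity in hand, a rearrangement of $J(u)$ that uses $p > 2$ will give the positivity claim.

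First I would verify that $\set{u > 1}$ is nonempty. Were it empty, $\bdry{\set{u > 1}}$ would also be empty, so Definition \ref{def: generalized} would force $-\Delta u = 0$ throughout $\Omega$; combined with $u = 0$ on $\bdry{\Omega}$, the maximum principle would give $u \equiv 0$, contradicting the hypothesis. Since $u \in C^0(\closure{\Omega})$ vanishes on $\bdry{\Omega}$, continuity also gives $\set{u \ge 1} \strictsubset \Omega$, and the open set $\set{u > 1}$ has positive Lebesgue measure. Next, fix $\delta > 0$. The function $(u - 1 - \delta)_+$ lies in $H^1_0(\Omega)$ and is supported in the compact set $\set{u \ge 1 + \delta} \subset \set{u > 1}$, where the PDE holds classically; approximating it by $C^\infty_c(\set{u > 1})$ functions and integrating by parts gives
\[
\int_{\set{u > 1 + \delta}} |\nabla u|^2\, dx = \int_{\set{u > 1 + \delta}} (u - 1)^{p-1}\, (u - 1 - \delta)\, dx.
\]
Passing to the limit (monotone convergence on the left, dominated convergence with majorant $(u - 1)_+^p$ on the right, using that $\set{u > 1}$ has compact closure in $\Omega$ and that $u$ is bounded) yields the Nehari identity
\[
\int_{\set{u > 1}} |\nabla u|^2\, dx = \int_{\set{u > 1}} (u - 1)^p\, dx,
\]
whose right-hand side is strictly positive, so $u \in \M$.

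Substituting the Nehari identity into the definition of $J(u)$ and regrouping gives
\[
J(u) = \frac{1}{2} \int_{\set{u \le 1}} |\nabla u|^2\, dx + \left(\frac{1}{2} - \frac{1}{p}\right) \int_{\set{u > 1}} |\nabla u|^2\, dx + \vol{\set{u > 1}}.
\]
Since $p > 2$, every term is nonnegative, and $\vol{\set{u > 1}} > 0$, so $J(u) > 0$. The only real subtlety is justifying that $(u - 1 - \delta)_+$ is an admissible test function even though $-\Delta u$ may carry a singular distributional part on $\bdry{\set{u > 1}}$; this is resolved because the support of the test function sits strictly inside the open set $\set{u > 1}$, where the equation holds classically, so the free-boundary singularity never enters the computation.
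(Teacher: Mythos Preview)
Your proof is correct and follows essentially the same route as the paper: the authors likewise argue that a nontrivial $u$ must satisfy $u>1$ somewhere (else $u$ is harmonic and vanishes), then obtain the Nehari identity by testing the classical equation on $\{u>1\}$ against $(u-1-\eps)_+$ and letting $\eps\searrow 0$ (this is exactly equation \eqref{10}, to which their proof refers), and finally rewrite $J(u)$ using the identity to see positivity. Your writeup is in fact more self-contained on the justification that the singular part of $\Delta u$ never enters, which the paper leaves implicit.
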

\begin{proof}
If $u$ is a generalized solution of problem \eqref{1}, then by the
maximum principle, the set $\set{u < 1}$ is connected and either $u >
0$ everywhere or $u$ vanishes identically. If $u \le 1$ everywhere,
then $u$ is harmonic in $\Omega$ and hence vanishes identically
again. So if $u$ is nontrivial, then $u > 0$ in $\Omega$ and $u > 1$
in a nonempty open subset of $\Omega$, where it satisfies $- \Delta u
= (u - 1)^{p-1}$.  As in the proof of Lemma \ref{Lemma 6}  \ref{6.ii},
multiplying this equation by $u - 1$ and integrating
over the set $\set{u > 1}$ shows that $u$ lies on $\M$. Finally,
if $u\in \M$, then 
\[
J(u) = \frac{1}{2} \int_{\set{u < 1}} |\nabla u|^2\, dx + \left(\frac{1}{2} - \frac{1}{p}\right) \int_{\set{u > 1}} |\nabla u|^2\, dx + \vol{\set{u > 1}} > 0,
\]
where $\vol{\cdot}$ denotes the Lebesgue measure in $\R^N$. 
\end{proof}

For $u\in H_0^1(\Omega)$, set
\[
u^+ = (u - 1)_+, \quad u^- = 1 - (u - 1)_-; \qquad u = u^- + u^+.
\]
Let 
\[
W =\set{u \in H^1_0(\Omega) : u^\pm \neq 0}
\]
Then $\M \subset W$, and for $u\in W$, we define
the curve
\[
\zeta_u(s) = \begin{cases}
(1 + s)\, u^-, & s \in [-1,0]\\[5pt]
u^- + s\, u^+, & s \in (0,\infty),
\end{cases}
\]
which passes through $u$ at $s = 1$. For $s \in [-1,0]$,
\[
J(\zeta_u(s)) = \frac{(1 + s)^2}{2} \int_{\set{u < 1}} |\nabla u|^2\, dx
\]
is increasing in $s$.  There is a discontinuity in $J$ at $s=0$:
\[
\lim_{s \searrow 0}\, J(\zeta_u(s)) = J(\zeta_u(0)) + \vol{\set{u > 1}} > J(\zeta_u(0)).
\]
For $s \in (0,\infty)$, 
\begin{multline} \label{30}
J(\zeta_u(s)) = \frac{1}{2} \int_{\set{u < 1}} |\nabla u|^2\, dx + \frac{s^2}{2} \int_{\set{u > 1}} |\nabla u|^2\, dx - \frac{s^p}{p} \int_{\set{u > 1}} (u - 1)^p\, dx\\[7.5pt]
+ \vol{\set{u > 1}}
\end{multline}
and
\[
\frac{d}{ds}\, J(\zeta_u(s)) = s \left[\int_{\set{u > 1}} |\nabla u|^2\, dx - s^{p-2} \int_{\set{u > 1}} (u - 1)^p\, dx\right].
\]
Define 
\[
s_u = \left[\frac{\dint_{\set{u > 1}} |\nabla u|^2\, dx}{\dint_{\set{u > 1}} (u - 1)^p\, dx}\right]^{1/(p-2)}.
\]
Thus we see that $J(\zeta_u(s))$ increases for $s \in [-1,s_u)$, 
attains its maximum at $s = s_u$ and decreases for $s \in (s_u,\infty)$, and
\begin{equation} \label{26}
\lim_{s \to \infty}\, J(\zeta_u(s)) = - \infty.
\end{equation}

\begin{proposition} \label{Proposition 9}
We have
\begin{equation} \label{31}
c^* \le \inf_{u \in \M}\, J(u).
\end{equation}
If $u \in \M$ and $J(u) = c^*$, then $u$ is a mountain pass point of $J$.
\end{proposition}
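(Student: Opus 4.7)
The plan is to use the continuous curve $\zeta_u : [-1, \infty) \to H^1_0(\Omega)$ defined just before the proposition statement. The defining identity for $\M$ forces the critical scaling parameter $s_u$ to equal $1$, so $J \circ \zeta_u$ peaks exactly at $s = 1$ (the passage through $u$) with value $J(u)$. Extending $\zeta_u$ beyond its peak yields an admissible path in $\Gamma$ whose supremum of $J$ equals $J(u)$, which gives \eqref{31}. The mountain pass characterization is a pinching argument: if $\set{v \in U : J(v) < c^*}$ were path-connected in some neighborhood of $u$, a connecting arc could be spliced into $\gamma$ in place of a short piece of $\zeta_u$ around $u$, producing an admissible path with supremum of $J$ strictly below $c^*$, violating the min-max definition.

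For the first assertion, fix $u \in \M$. The Nehari identity $\int_{\set{u > 1}} |\nabla u|^2\, dx = \int_{\set{u > 1}} (u - 1)^p\, dx$ gives $s_u = 1$. Via \eqref{30}, $s \mapsto J(\zeta_u(s))$ is increasing on $[-1, 0]$ to the value $J(u^-) = \tfrac{1}{2}\int_{\set{u < 1}} |\nabla u|^2\, dx$, jumps upward by $\vol{\set{u > 1}}$ at $s = 0^+$, is increasing on $(0, 1]$ to its global maximum $J(u)$, and decreases on $[1, \infty)$ to $-\infty$ by \eqref{26}. Pick $T > 1$ with $J(\zeta_u(T)) < 0$ and reparametrize $\zeta_u|_{[-1, T]}$ linearly to obtain $\gamma \in C([0, 1], H^1_0(\Omega))$ with $\gamma(0) = 0$ and $J(\gamma(1)) < 0$; then $\gamma \in \Gamma$ and $\sup_{t \in [0, 1]} J(\gamma(t)) = J(u)$. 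Taking the infimum over $u \in \M$ yields \eqref{31}.

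For the mountain pass statement, assume $u \in \M$ with $J(u) = c^*$, and let $U$ be a neighborhood of $u$. Continuity of $\zeta_u$ in $H^1_0(\Omega)$ and the strict local maximum of $J \circ \zeta_u$ at $s = 1$ (since $s_u = 1$) ensure that $\zeta_u(1 \pm \delta) \in U$ with $J(\zeta_u(1 \pm \delta)) < c^*$ for all sufficiently small $\delta > 0$, so $\set{v \in U : J(v) < c^*}$ is nonempty. For the non-path-connectedness, argue by contradiction: if there exists a path $\eta : [0, 1] \to U \cap \set{J < c^*}$ joining $\zeta_u(1 - \delta)$ and $\zeta_u(1 + \delta)$, splice $\eta$ into $\gamma$ in place of $\zeta_u|_{[1 - \delta, 1 + \delta]}$ to get $\tilde\gamma \in \Gamma$. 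On each surviving $\zeta_u$-piece, $J$ attains a maximum of at most $\max\set{J(\zeta_u(1 - \delta)), J(\zeta_u(1 + \delta))} < c^*$; on $\eta$, $J < c^*$ pointwise. If one can upgrade this to $\sup_\eta J < c^*$, then $\sup_{\tilde\gamma} J < c^*$, contradicting $c^* = \inf_\Gamma \sup_\gamma J$.

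The principal obstacle is the last upgrade. $J$ is only lower-semicontinuous on $H^1_0(\Omega)$---the characteristic-function term $\vol{\set{v > 1}}$ is lsc via Fatou but genuinely discontinuous at $v$ with $\vol{\set{v = 1}} > 0$---so on the compact image of $\eta$ the supremum of $J$ need not be attained, and pointwise $J < c^*$ does not formally give $\sup_\eta J < c^*$. Analyzing a potential extremizing sequence $v_n \in \eta$ with $J(v_n) \to c^*$, strong $H^1_0$-convergence to $v^* \in \eta$ and convergence of the smooth terms isolate the discrepancy $c^* - J(v^*) > 0$ to a loss of mass of $\vol{\set{v_n > 1}}$ onto the set $\set{v^* = 1}$. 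I would resolve this by perturbing $\eta$ within $U \cap \set{J < c^*}$ to avoid the locus $\set{v : \vol{\set{v = 1}} > 0}$, on whose complement $J$ is continuous, thereby attaining $\sup_\eta J < c^*$ and closing the contradiction.
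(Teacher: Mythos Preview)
Your argument is essentially the paper's, step for step: for \eqref{31} you use the curve $\zeta_u$, observe $s_u=1$ on $\M$, extend past the peak until $J<0$, and reparametrize to land in $\Gamma$; for the mountain pass property you pick $t^\pm$ on either side of the peak, and splice a hypothetical connecting arc $\eta\subset U\cap\{J<c^*\}$ into $\gamma_u$ to contradict the min--max value. This is exactly what the paper does.

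The subtlety you isolate in your last paragraph---that pointwise $J<c^*$ along $\eta$ does not immediately force $\sup_\eta J<c^*$, since $u\mapsto\vol{\{u>1\}}$ is only lower semicontinuous---is \emph{not} addressed in the paper's proof either. The paper simply writes that the spliced path ``gives a path in $\Gamma$ on which $J<c^*$, contradicting the definition of $c^*$,'' without passing from pointwise strict inequality to a strict bound on the supremum. So the concern you raise is a genuine technical point that the paper elides; your diagnosis (the failure is localized at $v^*$ with $\vol{\{v^*=1\}}>0$) is correct, and your proposed fix by perturbation is a reasonable strategy, but the paper offers no alternative argument here. In short: your proof matches the paper's, and the ``obstacle'' you flag is present in the original as well.
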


\begin{proof}
For each $u \in \M$, \eqref{26} implies that 
we may choose $\bar{s} > 1$ sufficiently large
that that $J(\zeta_u(\bar{s})) < 0$.  Note
that $s_u=1$.  Therefore, 
\[
\gamma_u(t) = \zeta_u((\bar{s} + 1)\, t - 1), \quad t \in [0,1]
\]
defines a path $\gamma_u \in \Gamma$ such that
\[
\max_{v \in \gamma_u([0,1])}\, J(v) = J(\zeta_u(s_u)) = J(u),
\]
so $c^* \le J(u)$. \eqref{31} follows.

Now suppose $J(u) = c^*$ and let $U$ be a neighborhood of $u$. The
path $\gamma_u$ passes through $u$ at $t = 2/(\bar{s} + 1) =: \bar{t}$ and
$J(\gamma_u(t)) < c$ for $t \ne \bar{t}$. By the continuity of $\gamma_u$,
there exist $0 < t^- < \bar{t} < t^+ < 1$ such that $\gamma_u(t^\pm) \in
U$, in particular, the set $\set{v \in U : J(v) < c}$ is nonempty. If
it is path connected, then this set contains a path $\eta$ joining
$\gamma_u(t^\pm)$, and reparametrizing $\restr{\gamma_u}{[0,t^-]} \cup
\eta \cup \restr{\gamma_u}{[t^+,1]}$ gives a path in $\Gamma$ on which
$J < c^*$, contradicting the definition of $c^*$. So the set is not path
connected, and $u$ is a mountain pass point of $J$.
\end{proof}

For $u \in W$, $\zeta_u$ intersects $\M$ exactly at one point, namely, 
where $s = s_u$, and $s_u = 1$ if $u \in \M$. So we can define a 
continuous projection $\pi : W \to \M$ by
\[
\pi(u) = \zeta_u(s_u) = u^- + s_u\, u^+.
\]

\begin{lemma} \label{Lemma 8}
For $u \in W$,
\[
J(\pi(u)) = \frac{1}{2} \int_{\set{u < 1}} |\nabla u|^2\, dx + \left(\frac{1}{2} - \frac{1}{p}\right) s_u^2 \int_{\set{u > 1}} |\nabla u|^2\, dx + \vol{\set{u > 1}}.
\]
In particular, for $u \in \M$, since $\pi(u) = u$, 
\[
J(u) = \frac{1}{2} \int_{\set{u < 1}} |\nabla u|^2\, dx + \left(\frac{1}{2} - \frac{1}{p}\right) \int_{\set{u > 1}} |\nabla u|^2\, dx + \vol{\set{u > 1}}.
\]
\end{lemma}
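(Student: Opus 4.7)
The plan is to derive Lemma \ref{Lemma 8} as a direct substitution of $s = s_u$ into the explicit expression \eqref{30} for $J(\zeta_u(s))$ that was computed just above, combined with the algebraic identity that defines $s_u$.

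The first step: from the definition
\[
s_u = \left[\frac{\dint_{\set{u > 1}} |\nabla u|^2\, dx}{\dint_{\set{u > 1}} (u - 1)^p\, dx}\right]^{1/(p-2)},
\]
I would rewrite this as the equality
\[
s_u^{p-2} \int_{\set{u > 1}} (u - 1)^p\, dx = \int_{\set{u > 1}} |\nabla u|^2\, dx,
\]
and then multiply both sides by $s_u^2$ to get
\[
s_u^p \int_{\set{u > 1}} (u - 1)^p\, dx = s_u^2 \int_{\set{u > 1}} |\nabla u|^2\, dx.
\]
This is the sole identity needed.

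The second step: substitute $s = s_u$ into \eqref{30}. The two terms involving $\set{u > 1}$ combine via the identity above, collapsing $\tfrac{s_u^2}{2}\int_{\{u>1\}}|\nabla u|^2 - \tfrac{s_u^p}{p}\int_{\{u>1\}}(u-1)^p$ into $(\tfrac{1}{2}-\tfrac{1}{p})\, s_u^2 \int_{\{u>1\}} |\nabla u|^2\, dx$. Since $\pi(u) = \zeta_u(s_u)$ by definition, this yields the first formula in the statement.

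The second formula follows immediately: if $u \in \M$, the Nehari condition $\int_{\{u>1\}}|\nabla u|^2\, dx = \int_{\{u>1\}}(u-1)^p\, dx$ forces $s_u = 1$, so $\pi(u) = u^- + u^+ = u$, and the coefficient $s_u^2$ disappears from the general formula. There is no serious obstacle here — the lemma is purely an algebraic reformulation of \eqref{30} at the critical parameter value $s_u$, packaged for convenient use in Section \ref{sec: final}.
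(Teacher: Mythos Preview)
Your proof is correct and follows exactly the same approach as the paper: substitute $s = s_u$ into \eqref{30} and use the identity $s_u^2 \int_{\{u>1\}} |\nabla u|^2\, dx = s_u^p \int_{\{u>1\}} (u-1)^p\, dx$, which is immediate from the definition of $s_u$. The paper's proof is simply a terser version of what you wrote.
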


\begin{proof}
For $u \in W$, $J(\pi(u))$ is given by \eqref{30} with $s = s_u$, and
\[
s_u^2 \int_{\set{u > 1}} |\nabla u|^2\, dx = s_u^p \int_{\set{u > 1}} (u - 1)^p\, dx. \QED
\]
\end{proof}

\begin{proposition} \label{Proposition 2}
If $u$ is a locally Lipschitz continuous minimizer of $\restr{J}{\M}$, then $u$ is nondegenerate (Definition \ref{def: degenerate}).
\end{proposition}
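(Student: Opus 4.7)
The plan is to argue by contradiction. Suppose $u$ is a locally Lipschitz minimizer of $\restr{J}{\M}$ that is degenerate: there exist $x_j \in \set{u > 1}$ with $r_j := \dist{x_j}{\set{u \le 1}} \to 0$ and $\eta_j := u(x_j) - 1$ satisfying $\eta_j / r_j \to 0$. The aim is to produce $\tilde w_j \in \M$ with $J(\tilde w_j) < J(u)$ for $j$ large, contradicting minimality. I take $\tilde w_j = \pi(w_j)$, where $w_j = u - \alpha_j \psi_j$ is built by subtracting a nonnegative bump of height $\alpha_j = o(r_j)$ supported in a ball of radius $O(r_j)$ around $x_j$; the projection through $\pi$ keeps $\tilde w_j$ in $\M$, and Lemma \ref{Lemma 8} supplies a clean formula for $J(\tilde w_j)$. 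The driving gain is that pushing $u$ strictly below $1$ on a ball of radius comparable to $r_j$ decreases $\vol{\set{u > 1}}$ by a definite $c\, r_j^N$, while all other contributions will turn out to be $o(r_j^N)$.

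The principal preparatory step is to amplify the pointwise smallness $u(x_j) - 1 = o(r_j)$ into uniform sup and gradient bounds near $x_j$. A preliminary variational argument -- perturbing $u$ by smooth functions compactly supported in $\set{u > 1}$ or $\set{u < 1}$ and projecting back to $\M$ via $\pi$ -- shows that $u$ satisfies $-\Delta u = (u-1)^{p-1}$ classically in $\set{u > 1}$ and is harmonic in $\set{u < 1}$. Let $h_j$ solve $\Delta h_j = 0$ in $B_{r_j}(x_j) \subset \set{u > 1}$ with $h_j = u$ on $\bdry{B_{r_j}(x_j)}$. Since $u \ge 1$ on $\bdry{B_{r_j}(x_j)}$ with a point of equality (by the definition of $r_j$), $h_j - 1 \ge 0$ is harmonic, so Harnack's inequality gives $\sup_{B_{3r_j/4}(x_j)}(h_j - 1) \le C(h_j(x_j) - 1) \le C\eta_j$. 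Combined with the maximum-principle estimate $0 \le u - h_j \le C r_j^{p+1}$ (from $-\Delta(u - h_j) = (u-1)^{p-1} = O(r_j^{p-1})$ on $B_{r_j}(x_j)$, using the Lipschitz-based bound $u - 1 = O(r_j)$), this yields $\sup_{B_{3r_j/4}(x_j)}(u - 1) \le C\eta_j + C r_j^{p+1} = o(r_j)$. Interior gradient estimates for the semilinear PDE then give $\|\nabla u\|_{L^\infty(B_{3r_j/8}(x_j))} = o(1)$, so $\int_{B_{3r_j/8}(x_j)} |\nabla u|^2\, dx = o(r_j^N)$.

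Set $\alpha_j := \sup_{B_{r_j/8}(x_j)}(u - 1) = o(r_j)$ and choose a smooth cutoff $\psi_j$ with $\psi_j \equiv 1$ on $B_{r_j/8}(x_j)$, $\supp \psi_j \subset B_{3r_j/8}(x_j)$, and $|\nabla \psi_j| \le C/r_j$. Then $w_j = u - \alpha_j \psi_j$ satisfies $w_j \le 1$ on $B_{r_j/8}(x_j)$, $w_j = u$ off $B_{3r_j/8}(x_j)$, and belongs to $W$ for $j$ large. Integrating $\int \nabla u \cdot \nabla \psi_j\, dx$ by parts against the PDE for $u$ and using the gradient bound on $\supp \psi_j$ gives $|\int_\Omega (|\nabla w_j|^2 - |\nabla u|^2)\, dx| = o(r_j^N)$, $|\int_\Omega ((w_j - 1)_+^p - (u-1)_+^p)\, dx| = o(r_j^{p+N})$, and $\vol{\set{w_j > 1}} - \vol{\set{u > 1}} \le -c\, r_j^N$. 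Further, $\int_{\set{w_j > 1}} |\nabla w_j|^2\, dx = \int_{\set{u > 1}} |\nabla u|^2\, dx + o(r_j^N)$, and since $u \in \M$ gives $s_u = 1$, the defining relation for $s_{w_j}$ combined with these estimates forces $s_{w_j} - 1 = o(r_j^N)$. Substituting into the formula of Lemma \ref{Lemma 8} and subtracting the analogous expression for $J(u)$, the Dirichlet and Nehari-scaling differences collapse to $o(r_j^N)$, leaving $J(\pi(w_j)) - J(u) \le -c\, r_j^N + o(r_j^N) < 0$ for $j$ large, the desired contradiction.

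The principal analytical obstacle is the $o(r_j)$ improvement of the sup norm on $B_{3r_j/4}(x_j)$: the raw Lipschitz bound gives only $u - 1 = O(r_j)$ there, which is insufficient. The harmonic-replacement-plus-Harnack device is what promotes the single pointwise fact $\eta_j / r_j \to 0$ into uniform $o(r_j)$ control, and this in turn powers the gradient estimate that tames the projection. A secondary subtlety is that $\pi$ may a priori raise the energy (since $J(\pi(w)) \ge J(w)$ in general); the cancellation above works because $s_{w_j} - 1 = o(r_j^N)$, so the projection's contribution to $J$ is strictly lower-order than the volumetric gain of order $r_j^N$.
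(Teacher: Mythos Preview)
Your argument is correct and follows essentially the same strategy as the paper: use Harnack (via harmonic replacement) to upgrade the pointwise smallness $u(x_0)-1$ to a sup bound on a ball, build a competitor that is $\le 1$ on a sub-ball, project to $\M$ and invoke Lemma~\ref{Lemma 8}, and then play the $r^N$ volume gain against lower-order Dirichlet changes. The differences are cosmetic: you argue by contradiction and subtract a bump $\alpha_j\psi_j$, while the paper argues directly and uses a $\min$ truncation $w=\min\{v,\,C(\alpha+r^p)\psi\}$, which has the small advantage that $|\nabla z|$ on the modified set is explicitly $C(\alpha+r^p)|\nabla\psi|$, so no separate interior gradient estimate is needed; you also take the care to derive $-\Delta u=(u-1)^{p-1}$ in $\{u>1\}$ from minimality on $\M$, a point the paper uses without comment.
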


\begin{proof} Suppose that $B_r(x_0) \subset \set{x\in \Omega: u(x)>1}$ and
there is $x_1 \in \partial B_r(x_0)$ such that $u(x_1) = 1$. 
Define 
\[
v(y) = \frac1r(u(x_0+ ry) - 1).
\]
Our goal is to show that 
\[
\alpha : = v(0)\ge c >0
\]
We begin by oberving that 
\begin{equation} \label{35}
0 < v(y) = \frac1r (u(x_0+ry) - u(x_1)) \le \frac{L}{r}|x_0 - x_1 +ry| 
\le 2L \quad \forall y \in B_1(0),
\end{equation}
where $L$ is the Lipschitz constant of $u$ in $\set{u \ge 1}$.  Therefore,
\[
- \Delta v = r^p\, v^{p-1} \quad \text{in } B_1(0),
\]
Define $h$ by 
\[
-\Delta h = r^p\, v^{p-1} \ \mbox{in} \ B_1(0), \quad
h=0 \ \mbox{on} \ \partial B_1(0).
\]
Then $|h| \le C L^{p-1} r^p$ and applying the Harnack inequality
to $v-h + \max h$, there is a constant $C$ depending on $L$ and
dimension such that 
\[
v(y) \le C\, (\alpha + r^p) \quad \forall y \in B_{2/3}(0),
\]

Take a smooth cutoff function $\psi : B_1(0) \to [0,1]$ such that $\psi = 0$ in $\closure{B_{1/3}(0)}$, $0 < \psi < 1$ in $B_{2/3}(0) \setminus \closure{B_{1/3}(0)}$ and $\psi = 1$ in $B_1(0) \setminus B_{2/3}(0)$, let
\[
w(y) = \begin{cases}
\min \set{v(y),C\, (\alpha + r^p)\, \psi(y)}, & y \in B_{2/3}(0)\\[7.5pt]
v(y), & \mbox{otherwise}, 
\end{cases}
\]
and set $z(x) = 1 + r w((x-x_0)/r)$. Since $u$ is a minimizer of $\restr{J}{\M}$,
\[
J(u) \le J(\pi(z)).
\]
Since $z^- = u^-$, $z = 1$ in $\closure{B_{r/3}(x_0)}$, and $\set{z > 1} = \set{u > 1} \setminus \closure{B_{r/3}(x_0)}$, Lemma \ref{Lemma 8} implies
this inequality can be rewritten as 
\[
\left(\frac{1}{2} - \frac{1}{p}\right) \int_{\set{u > 1}} |\nabla u|^2\, dx + \vol{B_{r/3}(x_0)} \le \left(\frac{1}{2} - \frac{1}{p}\right) s_z^2 \int_{\set{u > 1}} |\nabla z|^2\, dx \, .
\]
Let $y = (x-x_0)/r$ and define
\[
\D := \set{x \in B_{2r/3}(x_0) : v(y) > C\, (\alpha + r^p)\, \psi(y)}
\]
Because $z = u$ outside $\D$, this last inequality implies
\begin{equation} \label{36}
s_z^2 \int_\D |\nabla z|^2\, dx + \left(s_z^2 - 1\right) \int_{\set{u > 1} \setminus \D} |\nabla u|^2\, dx \ge \frac{2p}{p - 2} \vol{B_{1/3}(0)} r^N.
\end{equation}

Since $\set{z > 1} = \set{u > 1} \setminus \closure{B_{r/3}(x_0)}$ and
$z = 1$ in $\closure{B_{r/3}(x_0)}$,
\[
s_z^{p-2} = \frac{\dint_{\set{z > 1}} |\nabla z|^2\, dx}{\dint_{\set{z > 1}} (z - 1)^p\, dx} = \frac{\dint_{\set{u > 1}} |\nabla z|^2\, dx}{\dint_{\set{u > 1}} (z - 1)^p\, dx}.
\]
Since $z = u$ in $\set{u > 1} \setminus \D$, we have 
\[
s_z^{p-2} \le
\frac{\dint_{\set{u > 1}} |\nabla u|^2\, dx + \dint_\D |\nabla z|^2\, dx}{\dint_{\set{u > 1}} (u - 1)^p\, dx - \dint_\D (u - 1)^p\, dx}
= 
\frac{A_1 + \int_\D |\nabla z|^2 \, dx}
{A_1 - \int_\D (u-1)^p \, dx},
\]
where, since $u\in \M$,
\[
A_1  =  \int_{\set{u>1}} |\nabla u|^2 \, dx = \int_{\set{u>1}} (u-1)^p \, dx
\]


It follows as in \eqref{35}, 
$0 < u - 1 < 2L\, r$ in $\D$, and $\vol{\D} = \O(r^N)$ as $r \to 0$.
Thus
\[
\int_\D (u - 1)^p\, dx = \O(r^{p+N}).
\]
It follows that
\begin{equation} \label{37}
s_z^{p-2} \le 1 + \frac1{A_1} 
\dint_\D |\nabla z|^2\, dx + \O(r^{p+N}).
\end{equation}
We have
\begin{equation} \label{28}
\int_\D |\nabla z|^2\, dx = C^2\, (\alpha + r^p)^2\, r^N \int_{\set{y : x \in \D}} |\nabla \psi|^2\, dy.
\end{equation}
The right-hand side is $\O(r^N)$ since $0 < \alpha < 2L$ by \eqref{35}.
Consequently, so \eqref{37} gives
\[
s_z^2 \le 1 + \frac{2}{(p - 2)A_1}\dint_\D |\nabla z|^2\, dx  + \O(r^{q+N}),
\]
where $q = \min \set{p,N} \ge 2$. Using this estimate in \eqref{36} now gives
\[
\frac{1}{r^N} \int_\D |\nabla z|^2\, dx + \O(r^q) \ge 2 \vol{B_{1/3}(0)}.
\]
In view of  \eqref{28}, we find that there are 
$r_0,\, c > 0$ such that $r \le r_0$  
implies $\alpha \ge c$.  This concludes the proof of nondegeneracy.
\end{proof}

\section{Proof of the main theorem and further boundary regularity} 
\label{sec: final}

\begin{proof}[Proof of Theorem \ref{Theorem 1}]
We can now conclude the proof of our main theorem. 
Let $u$ be the nontrivial generalized solution of problem \eqref{1} obtained 
in Lemma \ref{Lemma 6}. Since $u \in \M$,
\[
\inf_\M\, J \le J(u).
\]
By Lemma \ref{Lemma 6} \ref{6.iv}, Lemma \ref{Lemma 1}, and 
Proposition \ref{Proposition 9}, we also have
\[
J(u) \le \varliminf c_j \le \varlimsup c_j \le c^* \le \inf_\M\, J.
\]
So
\[
J(u) = c^* = \inf_\M\, J
\]
and $c_j \to c$. Then $u$ is a mountain pass point of $J$ by Proposition \ref{Proposition 9}, minimizes $\restr{J}{\M}$, and is therefore nondegenerate by Proposition \ref{Proposition 2}.
\end{proof}

\begin{definition}
We say that $u \in C(\Omega)$ satisfies the free boundary condition
$|\nabla u^+|^2 - |\nabla u^-|^2 = 2$ in the viscosity sense if
whenever there exist a point $x_0 \in \bdry{\set{u > 1}}$, a ball $B
\subset \set{u > 1}$ (resp. $\interior{\set{u \le 1}}$) with $x_0 \in
\bdry{B}$, and $\alpha$ (resp. $\gamma$) $\ge 0$ such that
\[
u(x) \ge \alpha \ip{x - x_0}{\nu}_+ + \o(|x - x_0|) \hquad (\text{resp. } u(x) \le - \gamma \ip{x - x_0}{\nu}_- + \o(|x - x_0|))
\]
in $B$, where $\nu$ is the interior (resp. exterior) unit normal to $\bdry{B}$ at $x_0$, we have
\[
u(x) < - \gamma \ip{x - x_0}{\nu}_- + \o(|x - x_0|) \hquad (\text{resp. } u(x) > \alpha \ip{x - x_0}{\nu}_+ + \o(|x - x_0|))
\]
in $B^c$ for any $\gamma$ (resp. $\alpha$) $\ge 0$ such that $\alpha^2 - \gamma^2 >$ (resp. $<$) $2$.
\end{definition}

\begin{definition}
We say that the point $x_0 \in \bdry{\set{u > 1}}$ is regular if there
exists a unit vector $\nu \in \R^N$, called the interior unit normal
to the free boundary $\bdry{\set{u > 1}}$ at $x_0$ in the measure
theoretic sense, such that
\[
\lim_{r \to 0}\, \frac{1}{r^N} \int_{B_r(x_0)} \abs{\goodchi_{\set{u > 1}}(x) - \goodchi_{\set{\ip{x - x_0}{\nu} > 0}}(x)} dx = 0.
\]
\end{definition}

\begin{proof}[Proof of Corollary \ref{Corollary 1}]
Let $\eps_j \searrow 0$ be a suitable sequence and let $u_j$ be the solution 
of \eqref{2} obtained in Lemma \ref{Lemma 7}. Then $u_j$ is uniformly bounded 
on $\Omega$ by Lemma \ref{Lemma 2} and $u_j$ converges uniformly on 
$\Omega$ to $u$ by Lemma \ref{Lemma 6} \ref{6.i}. Set
\[
f_j(x) = - (u_j(x) - 1)_+^{p-1}, \quad f(x) = - (u(x) - 1)_+^{p-1}, \quad x \in \Omega.
\]
Since $p > 2$, $f_j \to f$ uniformly on $\Omega$. Since, by Theorem \ref{Theorem 1},
$u$ is nondegenerate, the corollary follows from now follow from 
Corollaries 7.1, 7.2 and Theorem 9.2, respectively, of Lederman and Wolanski \cite{MR2281453}.
\end{proof}

We expect that, at least in dimension 2, the 
free boundary has a measure-theoretic
normal at all points and hence is smooth.  But this is an open
question, and it would even be nice to show that the
measure-theoretic normal exists at ``most'' free boundary points.

\def\cdprime{$''$}

\end{document}